\theoremstyle{plain}
\newtheorem{corollary}{Corollary}[section]
\newtheorem{theorem}[corollary]{Theorem}
\newtheorem{lemma}[corollary]{Lemma}
\theoremstyle{definition}
\newtheorem{definition}[corollary]{Definition}
\newtheorem{conjecture}[corollary]{Conjecture}
\title{Towards a characterization of convergent sequences of $P_n$-line graphs}
\author{Alvaro Carbonero$^1$ \\ $^1$University of Nevada, Las Vegas \\ $^1$Corresponding Author: carboa1@unlv.nevada.edu}
\date{\today}
\begin{document}

\maketitle

\begin{abstract}
Let $H$ and $G$ be graphs such that $H$ has at least 3 vertices and is connected. The $H$-line graph of $G$, denoted by $HL(G)$, is that graph whose vertices are the edges of $G$ and where two vertices of $HL(G)$ are adjacent if they are adjacent in $G$ and lie in a common copy of $H$. For each nonnegative integer $k$, let $HL^{k}(G)$ denote the $k$-th iteration of the $H$-line graph of $G$. We say that the sequence $\{ HL^k(G) \}$ converges if there exists a positive integer $N$ such that $HL^k(G) \cong HL^{k+1}(G)$, and for $n \geq 3$ we set $\Lambda_n$ as the set of all graphs $G$ whose sequence $\{HL^k(G) \}$ converges when $H\cong P_n$. The sets $\Lambda_3, \Lambda_4$ and $\Lambda_5$ have been characterized. To progress towards the characterization of $\Lambda_n$ in general, this paper defines and studies the following property: a graph $G$ is minimally $n$-convergent if $G\in \Lambda_n$ but no proper subgraph of $G$ is in $\Lambda_n$. In addition, prove conditions that imply divergence, and use these results to develop some of the properties of minimally $n$-convergent graphs.
\end{abstract}

\textbf{Keywords: } $H$-line graph, graph sequence convergence, line graph

\textbf{MSC:} 05C76

\section{Introduction}

In this paper all graphs are finite, simple, and undirected. 

Let $H$ and $G$ be graphs such that $H$ is a connected graph of order at least 3, and $G$ is a nonempty graph. Two edges $e$ and $f$ in a graph $G$ are said to be $H$-adjacent if the edges are adjacent and lie in a common subgraph isomorphic to $H$. Define the $H$-line graph of $G$, or $HL(G)$, as that graph whose vertices are the edges of $G$ and where two vertices of $HL(G)$ are adjacent if they are $H$-adjacent in $G$. Figure \ref{fig1} shows an example of graphs $G$ and $HL(G)$, where $H\cong P_5$. Notice that the edges $e_1$ and $e_2$ are adjacent and lie in a $P_5$ in $G$. By definition, it follows that $e_1$ and $e_2$, as vertices, are adjacent in $HL(G)$. On the other hand, edges $e_2$ and $e_3$ are adjacent in $G$ but do not lie in any common $P_5$. This leads to $e_2$ and $e_3$, as vertices, not being adjacent in $HL(G)$. 

\begin{figure}[!h]
\begin{center}
\includegraphics[width=40mm]{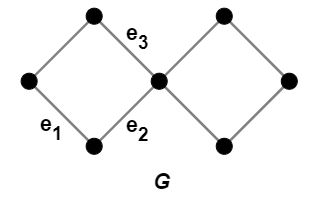}
\includegraphics[width=58mm]{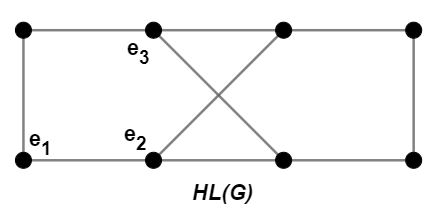}
\end{center}
\caption{A graph and its $H$-line graph when $H\cong P_5$.}
\label{fig1}
\end{figure}

For $k \geq 0$, define $HL^{k+1}(G) = HL(HL^k(G))$ where $HL^0(G) = G$. The sequence $\{HL^k(G) \}$ is said to converge if there exists an integer $N$ such that $HL^N(G)\cong HL^{N+1}(G)$. If the empty graph is part of the sequence, then the sequence is said to terminate. If the sequence does not converge nor terminate, then the sequence is said to diverge. Further, we call a graph $F$ a limit graph if $F\cong HL(F)$. Figure \ref{fig1.2} shows a graph $G$ that is a limit graph when $H\cong P_{10}$. For convenience, if the sequence $\{HL^k(G)\}$ converges and it is understood from context what $H$ we are referring to, we simply say that $G$ has a convergent sequence. 

\begin{figure}[!h]
\begin{center}
\includegraphics[width=60mm]{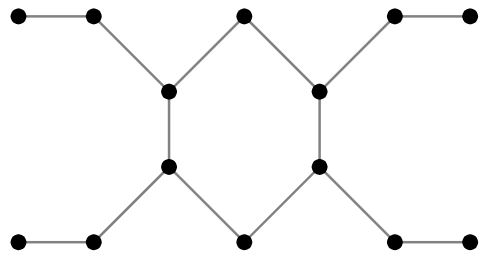}
\end{center}
\caption{A limit graph with $H\cong P_{10}$.}
\label{fig1.2}
\end{figure}

Some special cases of $H$ have been studied. The one studied the most by far is when $H\cong P_3$. If such is the case, then $HL(G) = L(G)$, the well-known line graph of $G$. With the exception of $P_3$, most of the research surrounding $H$-line graphs pertain the characterization of graphs with convergent sequences. In \cite{main}, Chartrand \textit{et. al.} proved that no graph $G$ has a convergent sequence when $H\cong K_{1,n}$ for $n\geq 3$ or when $H\cong K_n$ for $n \geq 4$. In \cite{hisc3}, Jarrett proved that $G$ has a convergent sequence when $H\cong C_3$ if and only if $C_3$ is a subgraph of $G$. In \cite{hisc4}, Chartrand \textit{et. al.} proved that if $G$ is a graph such that $C_4\subseteq G$ but $G$ contains no subgraph isomorphic to $K_1+P_4$, $P_3\times K_2$, $K_{2,3}$ or $K_4$, then $G$ has a sequence that converges when $H\cong C_4$. However, a counterexample of the converse is also provided. Note that each of these four graphs have $C_4$ as a subgraph. This demonstrates that the result in \cite{hisc3} for $C_3$ does not generalize easily.

This paper will focus on the case when $H\cong P_n$ for $n \geq 4$. We ignore the case where $n = 3$ because this is the case of the line graph, making this case vastly different from others. \textbf{From now on assume that $H\cong P_n$}. Define $\Lambda_n$ as the set of all graphs $G$ whose sequence converges. In \cite{chartrandthesis}, Chartrand proved that $\Lambda_3$ is composed of graphs whose components are cycles or $K_{1,3}$. In \cite{main}, Chartrand \textit{et al.} proved that $\Lambda_4$ and $\Lambda_5$ are composed of graphs whose components are cycles of order at least $4$ or $5$, respectively, and the graphs in Figure \ref{fig1.4}. Characterizing $\Lambda_n$ in general becomes harder as $n$ increases because new types of behaviour become possible. For example, Britto-Pacumio in \cite{limits} found and studied disconnected graphs with convergent sequences that had components which did not have convergent sequences. See Figure \ref{fig1.3} for an example found in \cite{limits}. This complex behaviour does not happen when $n = 4, 5$, and so the proofs that characterize $\Lambda_4$ and $\Lambda_5$ are difficult to replicate for a general $n$. 

\begin{figure}
    \centering
    \includegraphics[scale=0.6]{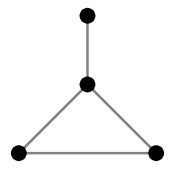}
    \ \ \ \ \ \ \ \ \ \ \ \ \ \ \ \ \ \ \ \ \ 
    \includegraphics[scale=0.5]{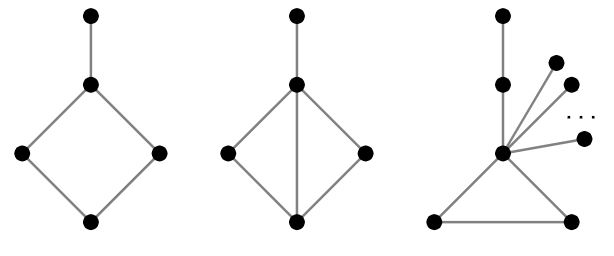}
    \caption{To the left, one graph in  $\Lambda_4$, and to the right, three graphs in $\Lambda_5$.}
    \label{fig1.4}
\end{figure}

\begin{figure}[!h]
\begin{center}
\includegraphics[scale=0.5]{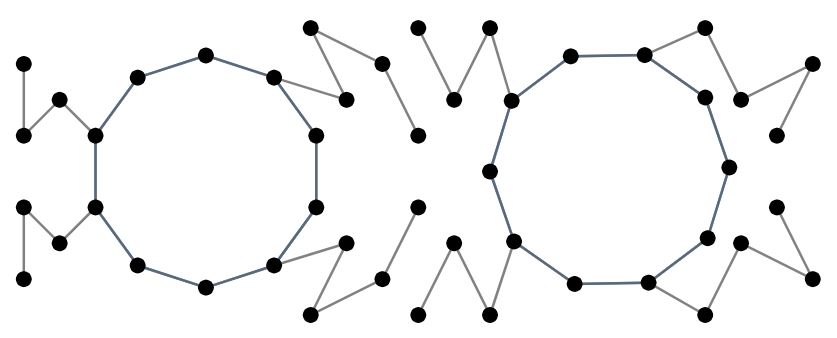}
\end{center}
\caption{A disconnected limit graph when $H\cong P_{16}$, found in \cite{limits}.}
\label{fig1.3}
\end{figure}

To develop a route towards the characterization of $\Lambda_n$, we will study a subset of this set. We say that $G$ is minimally $n$-convergent if $G\in \Lambda_n$ but every proper subgraph of $G$ is not in $\Lambda_n$. Further, let $\lambda_n$ be the set of all graphs with this property. The content of this paper is separated into two parts. The first one deals with conditions that imply divergence, along with a way to study this behaviour. The second part deals with the properties of minimally $n$-converges with results proven by using the ones developed in the first part. At the end of the second part, we also provide a small summary of the ways in which the study of minimally $n$-convergent graphs can progress.

\section{Conditions that imply divergence}

Knowing the conditions that make a sequence diverge facilitates the study of sequences that do converge as they provide the properties that need to be avoided.

We start by categorizing divergence. Although not obvious at first, there are two kinds of divergence. The first is divergence by order. The sequence of a graph $G$ diverges by order if for every positive integer $N$, there exists an integer $k$ such that $|V(HL^k(G))| \geq N$. The second kind of divergence is when the order is bounded yet the sequence of $G$ does not converge. It is easy to generate graphs with the first kind of divergence. The second kind is more difficult to obtain. In fact, the first paper on iterated $H$-line graph sequences, \cite{main}, conjectured that the second kind does not exist. Not much is known about the second kind of divergence, but we do know that it exists. As mentioned above, the connected graphs $G$ such that $G\not \cong HL(G)$ but $G\cong HL^2(G)$ presented in \cite{limits} are an example of a graph with this kind of divergence. Since we are defining divergence by order in this paper, all of the previous results that we will cover do not, in their original paper, make use of this term. However, by inspecting their proofs it can be seen that the specific divergence they demonstrate is divergence by order. An important observation we will use in future proofs is that if $G$ has a subgraph with a sequence that diverges by order, then $G$ has a sequence that diverges by order.

We start by covering two conditions that are known to cause divergence by order. For the first one we need to define a specific class of graphs. Let $G^r_m$ be a unicyclic graph of order $m+r$ whose cycle has size $m$ and where one of the vertices in the cycle is adjacent to a pendent vertex of a path of order $r$. See Figure \ref{fig2.something} for an example. We have the following result due to Manjula in \cite{support1}.

\begin{figure}
    \centering
    \includegraphics[scale = 0.5]{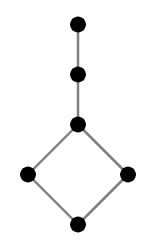}
    \caption{The graph $G^2_4$.}
    \label{fig2.something}
\end{figure}

\begin{theorem} \label{secondfamily}
\textbf{Manjula.} If $n = m+r$, then the sequence $\{HL^k(G^r_m) \}$ converges  to $C_{m+r}$ in $r$ iterations. Further, if $m + r>n$, then the sequence diverges by order.
\end{theorem}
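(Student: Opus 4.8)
The plan is to reduce everything to a single structural computation of $HL(G^r_m)$ and then iterate it. Label the cycle $C_m$ by vertices $c_1,\dots,c_m$ with $c_1$ the vertex bearing the tail $c_1t_1t_2\cdots t_r$, write $a_i=c_ic_{i+1}$ (indices mod $m$) for the cycle edges and $b_1=c_1t_1$, $b_j=t_{j-1}t_j$ for the tail edges. The central step is the recursion
\[
HL(G^r_m)\cong G^{r-1}_{m+1}\qquad\text{whenever } r\ge 1,\ m<n,\ m+r\ge n .
\]
To prove it I would compute, for each pair of adjacent edges, the largest path containing them. The Hamiltonian path $t_r,\dots,t_1,c_1,c_2,\dots,c_m$ has $m+r\ge n$ vertices and contains every tail edge, every consecutive cycle pair $a_i,a_{i+1}$, and both pairs $b_1,a_1$ and $b_1,a_m$; hence all of these are $P_n$-adjacent. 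The only exceptional pair is the wrap-around pair $a_m,a_1$ meeting at the tail vertex $c_1$: any path through both has $c_1$ interior, so it cannot enter the tail and is confined to the cycle, giving at most $m<n$ vertices. Thus $a_m,a_1$ are \emph{not} $P_n$-adjacent, the cycle ``opens'', and $b_1$ (adjacent to both $a_1$ and $a_m$) closes a new cycle of length $m+1$ off which the path $b_2\cdots b_r$ hangs as a tail of length $r-1$; this is exactly $G^{r-1}_{m+1}$.

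For the first assertion ($n=m+r$) we have $m<n$, so the recursion applies and, used $r$ times, gives
\[
G^r_m\to G^{r-1}_{m+1}\to\cdots\to G^{0}_{m+r}=C_{m+r}=C_n .
\]
It remains to see $C_n$ is a limit graph: in $C_N$ with $N\ge n$ any two adjacent edges lie on a Hamiltonian path of the cycle, which has $N\ge n$ vertices, so every adjacency survives and $HL(C_N)\cong C_N$. Hence the sequence reaches $C_{m+r}$ after exactly $r$ steps and stabilizes, proving convergence to $C_{m+r}$ in $r$ iterations.

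For the second assertion ($m+r>n$) we may assume the tail is nonempty, $r\ge 1$ (for $r=0$ the graph is the limit graph $C_m$, which is the regime of the first assertion). I would first reduce the case $m<n$ to the case $m\ge n$: the recursion preserves the total order $m+r$ while strictly increasing the cycle length, so the cycle climbs through $m,m+1,\dots$ and, because $m+r>n$, reaches exactly $n$ while a tail of length $s=m+r-n\ge 1$ still remains; that is, the sequence passes through $G^{s}_{n}$. Recomputing $HL$ with $m=n\ge n$, the previously exceptional pair $a_m,a_1$ now lies on a path of $m\ge n$ vertices inside the cycle, so that adjacency \emph{survives}, and $HL(G^s_n)$ ceases to be unicyclic: it is a cycle $C_n$ together with a vertex $b_1$ joined to two consecutive cycle vertices (a second, independent cycle) and a pendant tail. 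Since $|V(HL(G))|=|E(G)|$ always, orders are governed by edge counts, and Euler's formula for a connected graph gives the identity $|E(HL(G))|=|E(G)|+c(HL(G))-1$, where $c$ is the cyclomatic number; thus the order strictly increases at a step precisely when the iterate is non-unicyclic.

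The main obstacle, and the crux of the divergence claim, is to show that this growth is \emph{sustained}: I must prove that once an iterate is connected, non-unicyclic, and contains a cycle of length at least $n$, the same three properties pass to its $H$-line graph. Granting this, every subsequent iterate has $c\ge 2$, so $|E|$ strictly increases at each step and the order diverges. To establish preservation I would argue, as in the limit-graph computation, that a cycle of length $\ge n$ reproduces a cycle of length $\ge n$ in $HL$ (consecutive edges of such a cycle always share a long enough path), that the extra ``handle'' contributes a second independent cycle whose defining adjacencies likewise survive, and that connectivity is maintained; the delicate point is verifying that enough adjacencies persist simultaneously to keep two independent cycles present. Should this be cleaner to package, I would instead exhibit inside some iterate a subgraph already known to diverge by order and invoke the observation that a subgraph diverging by order forces the whole sequence to diverge by order.
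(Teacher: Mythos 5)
This theorem is quoted from Manjula's paper and is not proved in the present text, so there is no in-paper argument to match your proposal against; judged on its own terms, your first half is essentially a complete proof. The recursion $HL(G^r_m)\cong G^{r-1}_{m+1}$ for $r\ge 1$, $m<n\le m+r$ is correct, and your verification is sound up to one small slip: the single Hamiltonian path $t_r,\dots,t_1,c_1,c_2,\dots,c_m$ never uses the edge $a_m=c_mc_1$, so it does not witness the pairs $(a_{m-1},a_m)$ and $(b_1,a_m)$; you need the mirrored path $t_r,\dots,t_1,c_1,c_m,c_{m-1},\dots,c_2$ for those. With that repaired, the argument that $(a_m,a_1)$ is the unique non-surviving adjacency when $m<n$, the iteration $G^r_m\to G^{r-1}_{m+1}\to\cdots\to C_{m+r}$, and the observation that $HL(C_N)\cong C_N$ for $N\ge n$ together give the first assertion, convergence to $C_{m+r}$ in exactly $r$ iterations.

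The second half has a genuine gap, which you yourself flag: after driving the sequence to $G^s_n$ with $s=m+r-n\ge 1$ and computing $HL(G^s_n)$, you reduce divergence to the claim that ``connected, non-unicyclic, containing a cycle of length $\ge n$'' is preserved under $HL$, and you do not prove it. This preservation claim is not routine: later iterates no longer have a controlled form, and while a cycle of length $\ge n$ does reproduce one (consecutive edges share a long path inside the cycle), showing that a \emph{second independent} cycle persists requires verifying $P_n$-adjacencies in graphs whose structure you no longer know; the cyclomatic-number bookkeeping via $|E(HL(G))|=|E(G)|+c(HL(G))-1$ is only valid once connectivity and $c\ge 2$ are established at every step. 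The clean repair is already sitting in your own computation: $HL(G^s_n)$ contains the $(n+1)$-cycle $a_1,a_2,\dots,a_n,b_1,a_1$ together with the chord $a_1a_n$, whose endpoints are at distance two on that cycle --- that is, a copy of $F_{n+1}$ with $n+1\ge n$. Your fallback plan (``exhibit a subgraph already known to diverge'') then closes the proof in one line via Theorem \ref{char1} and the observation that a subgraph diverging by order forces the whole sequence to diverge by order; this is legitimate and non-circular, since Theorem \ref{char1} is due to Chartrand \textit{et al.} and does not rest on the present theorem. (You cannot instead invoke Theorem \ref{key1}, whose proof in this paper uses Theorem \ref{secondfamily}.) The same $F_{m+1}$ observation also disposes directly of the case where $m\ge n$ from the outset with $r\ge 1$, so the reduction-plus-preservation machinery can be discarded entirely.
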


Since this class of graphs $G^m_r$ where $m+r=n$ arises frequently, we will define the set that contains them. Let this family, which we denote by $\delta_n$, of graphs be defined as follows:
$$
\delta_n = \{G^r_m \text{ : }r+m=n \}.
$$
Notice that if $G\in \delta_n$, then any proper subgraph of $G$ terminates. Thus, $\delta_n \subset \lambda_n$.

The second known condition that implies divergence that we will use also requires us to define another class of graphs. For $m\geq 4$, define $F_m$ to be the graph of order $m$ and size $m+1$ consisting of a cycle of size $m$ chorded by an edge that joins two vertices whose distance is 2. The following result is due to  Chartrand \textit{et. al.} in \cite{main}.

\begin{theorem} \label{char1}
\textbf{Chartrand \textit{et. al. }} For $n\geq 4$ and $m\geq n$, the graph $F_m$ has a sequence that diverges by order.
\end{theorem}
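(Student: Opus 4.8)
The plan is to reduce everything to Manjula's Theorem~\ref{secondfamily} by producing a diverging subgraph, but instead of searching inside $F_m$ itself I will pass to $HL(F_m)$. Label $F_m$ so that its cycle is $v_1 v_2 \cdots v_m v_1$ and its chord is $e = v_1 v_3$; the edges of $F_m$ are the $m$ cycle edges $w_i = v_i v_{i+1}$ (indices mod $m$) together with $e$, and these are exactly the $m+1$ vertices of $HL(F_m)$. I will show that these vertices contain a copy of $G^1_m$, i.e. a cycle of size $m$ carrying one pendant edge. Since $m \ge n$ gives $m+1 > n$, Theorem~\ref{secondfamily} says $G^1_m$ diverges by order; by the subgraph observation $HL(F_m)$ then diverges by order, and because the order sequence of $HL(F_m)$ is just the tail (after one term) of the order sequence of $F_m$, unboundedness of the former forces unboundedness of the latter, so $F_m$ diverges by order.

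Two $H$-adjacency facts must be checked. First, that the $m$ cycle edges span a $C_m$ in $HL(F_m)$: consecutive edges $w_i, w_{i+1}$ share $v_{i+1}$ and both lie on the path $v_i v_{i+1} \cdots v_{i+n-1}$ read along the cycle, whose $n$ vertices are distinct precisely because $n \le m$, so this is a genuine $P_n$ and $w_i, w_{i+1}$ are $H$-adjacent. Letting $i$ run around the cycle yields the closed adjacency chain $w_1 \sim w_2 \sim \cdots \sim w_m \sim w_1$, that is, a $C_m$ on the cycle-edge vertices of $HL(F_m)$.

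Second, that the chord $e$ attaches to this cycle: I will show $e$ is $H$-adjacent to $w_3 = v_3 v_4$ by exhibiting one $P_n$ containing both. The sequence $v_2\, v_1\, v_3\, v_4\, v_5 \cdots v_n$ uses the edge $v_1 v_2$, then the chord $v_1 v_3$, then the long-arc edges $v_3 v_4, \ldots, v_{n-1} v_n$; its vertices $v_2, v_1, v_3, v_4, \ldots, v_n$ are $n$ distinct vertices (here $v_4, \ldots, v_n$ exist and differ from $v_1$ exactly because $n \le m$), so it is a $P_n$ containing both $e$ and $w_3$. Adjoining the edge between $e$ and $w_3$ to the $C_m$ from the previous step makes $e$ a pendant vertex, exhibiting $G^1_m$ as a subgraph of $HL(F_m)$ and completing the reduction.

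I expect the main obstacle to be the uniform treatment of the boundary case $m = n$. For $m > n$ one could avoid $HL(F_m)$ entirely: deleting a short-arc edge of $F_m$ leaves $G^1_{m-1}$, and since its cycle size plus path order equals $m > n$, Theorem~\ref{secondfamily} already gives a diverging subgraph of $F_m$. But when $m = n$ that subgraph is $G^1_{n-1}$, which by the same theorem only \emph{converges} to $C_n$, so no proper subgraph of $F_n$ supplies divergence. The value of working one iteration out is exactly that the chord, which cannot act as a pendant inside $F_m$, does act as one in $HL(F_m)$; arranging both $H$-adjacency checks to depend only on the inequality $n \le m$ is what lets a single argument cover $m = n$ and $m > n$ together. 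The delicate point is verifying that the displayed vertex sequences are genuinely simple (non-repeating) paths across all these regimes, which is where the hypothesis $m \ge n$ is used.
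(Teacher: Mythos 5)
Your argument is correct, but note first that the paper contains no proof of Theorem~\ref{char1} to compare against: it is imported as a known result of Chartrand \emph{et al.}\ from \cite{main}, stated without proof. Judged on its own, your reduction holds up: the $m$ cycle edges $w_1,\dots,w_m$ are pairwise consecutively $P_n$-adjacent because $n\le m$ makes the arc $v_i v_{i+1}\cdots v_{i+n-1}$ a genuine path, giving a $C_m$ in $HL(F_m)$; the path $v_2\,v_1\,v_3\,v_4\cdots v_n$ is simple precisely when $n\le m$ and contains both the chord $e$ and $w_3$, so $e$ hangs as a pendant vertex and $G^1_m\subseteq HL(F_m)$; then $m+1>n$ lets you invoke Theorem~\ref{secondfamily}, and the passage from divergence of $HL(F_m)$ to divergence of $F_m$ is exactly the subgraph/tail observation the paper records in Section 2. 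This is a genuinely different route from the self-contained argument one would expect in \cite{main}, which (as reflected in how this paper itself manufactures an $F_{m+1}$ inside an $H$-line graph in the proof of Theorem~\ref{key1}) can proceed by induction: one checks that $HL(F_m)$ contains $F_{m+1}$ (indeed, in your labeling $e$ is also $P_n$-adjacent to $w_1$, $w_2$ and $w_m$, so the cycle $e,w_1,\dots,w_m,e$ chorded by $ew_2$ sits inside $HL(F_m)$), whence the order grows at every iteration with no external input. Your version trades that induction for a one-step reduction to a second black-box theorem; what it buys is brevity and, as you correctly diagnose, a uniform handling of the boundary case $m=n$, where no proper subgraph of $F_n$ diverges and one iteration of $HL$ is genuinely necessary before a diverging subgraph appears. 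The cost is only the logical dependency on Theorem~\ref{secondfamily}, which is harmless here since that result is stated earlier and independently, and your invocation of it (a $G^r_m$ with $r+m>n$) matches exactly how the paper itself uses it in the second case of Theorem~\ref{key1} and at the end of Theorem~\ref{key2}.
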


We use these two previous results to prove a condition that implies divergence. Define the circumference of a graph $G$, denoted by $cr(G)$, as the size of the largest cycle in $G$, and where $cr(G) = 0$ if $G$ is a tree

\begin{theorem} \label{key1}
Let $G'\subseteq G$ be a connected subgraph. If $cr(G') = m \geq n$ but $G'\not \cong C_m$, then $G$ has a sequence that diverges by order.
\end{theorem}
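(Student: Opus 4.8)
The plan is to use the stated observation to reduce the claim to exhibiting a single small subgraph that is already known to diverge by order; since $G'\subseteq G$, it suffices to find such a subgraph inside $G'$. First I would fix a cycle $C$ of length $m=cr(G')\geq n$. Because $G'$ is connected and $G'\not\cong C_m$, some edge $e=xy\notin E(C)$ is incident with $C$, say $x\in V(C)$ (take the last edge of a path in $G'$ running from an outside edge to $C$). This forces a dichotomy: either $y\notin V(C)$, so $e$ is a pendant edge, or $y\in V(C)$, so $e$ is a chord splitting $C$ into two arcs of lengths $a$ and $b$ with $a+b=m$ and $a,b\geq 2$.

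In the pendant case $C\cup\{e\}\cong G^1_m$; since $m\geq n$ gives $m+1>n$, Theorem~\ref{secondfamily} shows $G^1_m$ diverges by order, hence so does $G$. For the chord case I would apply two handles. If the chord joins vertices at distance $2$ (that is, $\min(a,b)=2$), then $C\cup\{e\}\cong F_m$ with $m\geq n\geq 4$, and Theorem~\ref{char1} gives divergence by order. If instead $m>n$ (for a chord at any distance), then deleting the first edge of the arc of length $a$ leaves the cycle of length $b+1$ formed by the chord together with the arc of length $b$, with a pendant path of order $a-1\geq 1$ attached at an endpoint of the chord; this subgraph is $G^{a-1}_{b+1}$, whose parameters sum to $(b+1)+(a-1)=m>n$, so Theorem~\ref{secondfamily} again applies.

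The single case left open by this scheme is $m=n$ with a chord at distance at least $3$, so that both arcs have length at least $3$. Here the two subcycles created by the chord have length strictly below $n$, the unique cycle of length $\geq n$ is $C$ itself, and its only chord is not at distance $2$; thus this theta graph contains no $F_{m'}$ with $m'\geq n$, while every unicyclic subgraph has cycle-plus-path length exactly $m=n$, the convergent boundary value in Theorem~\ref{secondfamily}. I expect this to be the main obstacle. My plan to clear it is to pass to $HL(G')$ and prove that its circumference strictly exceeds $n$, which reduces the situation to the already-settled case $m>n$ applied to $HL(G')$; divergence by order of $HL(G')$ transfers back to $G'$, since the sequence of $HL(G')$ is a tail of that of $G'$, and then back to $G$ by the observation. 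The mechanism is that such a theta, which I write $\Theta_{1,a,b}$ (two vertices joined by internally disjoint paths of lengths $1$, $a$, $b$, with $a+b=n$), has exactly $n$ vertices, so every copy of $P_n$ in it is a Hamiltonian path; the chord supplies enough routing that each pair of adjacent edges lies in a common Hamiltonian path, giving $HL(G')\cong L(G')$, and then one can route through the line-graph vertex of the chord a cycle of length greater than $n$. The technical heart, and the step I expect to be the most laborious, is verifying uniformly in $a,b\geq 3$ with $a+b=n$ that $HL(G')$ is connected, equals $L(G')$, and has circumference exceeding $n$; the smallest instance $a=b=3$, $n=6$ already exhibits the phenomenon, where $HL(\Theta_{1,3,3})$ has seven vertices and contains a $7$-cycle, and so falls under the case $m>n$.
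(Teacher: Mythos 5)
Your proposal is correct, but it takes a genuinely different route from the paper. The paper splits only into pendant edge versus chord: the pendant case is handled exactly as you do (a $G^1_m$ with $m+1>n$, Theorem~\ref{secondfamily}), but the chord case is handled \emph{uniformly} for every chord position and every $m\geq n$ by passing once to the $H$-line graph: since $m\geq n$, the $m$ cycle edges remain pairwise $P_n$-adjacent consecutively, and the chord is $P_n$-adjacent to all four cycle edges at its endpoints (via paths routed the long way around the cycle), so $HL(\text{cycle}+\text{chord})$ contains $F_{m+1}$ and Theorem~\ref{char1} finishes in one stroke. You instead split the chord case three ways: distance-$2$ chords give $F_m$ directly; $m>n$ chords give a static $G^{a-1}_{b+1}$ with parameter sum $m>n$ (a nice observation the paper does not make --- no iteration needed); and only the tight theta case $\Theta_{1,a,b}$ with $a+b=n$, $a,b\geq 3$ forces you to iterate. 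Your treatment of that residual case is sound: the Hamiltonian-path routings (traverse one arc, cross a hub, chord or second arc) do verify uniformly that each required pair of adjacent edges of $\Theta_{1,a,b}$ lies in a $P_n$, yielding the $(n+1)$-cycle $c,a_1,\dots,a_a,b_b,\dots,b_1,c$ in $HL(G')$ together with the extra edge $a_1b_1$. One simplification you missed: $a_1$ and $b_1$ are at distance $2$ on that $(n+1)$-cycle (through $c$), so this subgraph is precisely $F_{n+1}$ with $n+1\geq n$, and Theorem~\ref{char1} applies immediately --- no need to re-enter your own case analysis on $HL(G')$ or to argue $HL(G')=L(G')$ in full; you have in effect rediscovered the paper's $F_{m+1}$ construction restricted to the one case where your static arguments cannot avoid iterating. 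What each approach buys: the paper's is shorter and case-free; yours shows divergence without any $H$-line iteration except at the exact boundary configuration, which cleanly isolates where the difficulty of Theorem~\ref{key1} genuinely lives.
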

\begin{proof}
Since it is enough for one single component of $G$ to have a sequence that diverges by order, we can assume $G' = G$. Note that since $G\not \cong C_m$, then there exists an edge $e=uv$ not in the cycle such that $v$ is in the cycle. There are two cases.

The first case is when $u$ is in the cycle. Let $G_0\subseteq G$ contain the cycle and the edge $e$. Further, set $N(u) = \{v, u_1, u_2 \}$ and $N(v) = \{u, v_1, v_2 \}$ such that the vertices are labeled as in Figure \ref{fig2.1}. Since $m\geq n$, the sequence $v_1, ..., u_1, u, v, v_2, ..., u_2$ is a path of order at least $n$. Thus, $e_1$ and $e_4$ are $P_n$ adjacent to $e$. By making a similar path, we notice that $e_2$ and $e_3$ are $P_n$-adjacent to $e$ as well. And so Figure \ref{fig2.2} gives a subgraph of $HL(G_0)$ which, as can be seen, is isomorphic to $F_{m+1}$. By Theorem \ref{char1}, a subgraph of $HL(G_0)$ has a sequence that diverges by order, and so $G$ has a sequence that diverges by order thus finishing this case.

\begin{figure}[!h]
\begin{center}
\includegraphics[scale=0.4]{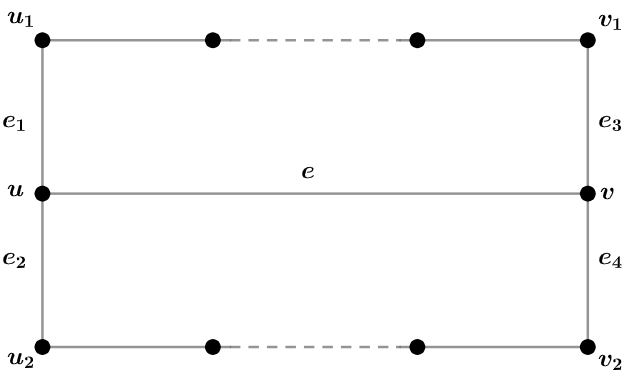}
\end{center}
\caption{The subgraph $G_0$ of $G$.}
\label{fig2.1}
\end{figure}

\begin{figure}[!h]
\begin{center}
\includegraphics[scale=0.45]{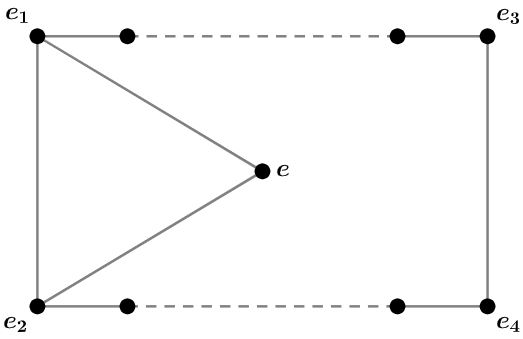}
\end{center}
\caption{A subgraph of $HL(G_0)$.}
\label{fig2.2}
\end{figure}

The second case is when $u$ is not in the cycle. This case, however, is a very simple case because then $G$ has $G^{1}_m$ as a subgraph. Since $m \geq n$, it follows that $m+1 > n$ so by Theorem \ref{secondfamily} the sequence of $G$ diverges by order.
\end{proof}

Theorem \ref{key1} is important because it shows that if the sequence of a connected graph $G\in \Lambda_n$ ever reaches a point where $HL^k(G)$ has a subgraph isomorphic to $C_m$ where $m \geq n$, then $HL^k(G)$ is in fact isomorphic to $C_m$. Although this condition is sufficient, we conjecture that it is also necessary.

\begin{conjecture}
The graph $G$ has a sequence that diverges by order if and only if there exists a $k$ such that $HL^k(G)$ has a connected subgraph $G'$ where $G'$ has a subgraph isomorphic to $C_m$ ($m\geq n$) but $G'\not \cong C_m$.
\end{conjecture}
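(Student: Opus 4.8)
The statement is a biconditional, so the plan is to treat the two implications separately: the reverse implication is essentially a repackaging of Theorem~\ref{key1}, while the forward implication carries the real difficulty and is the reason the statement is offered as a conjecture.

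For the direction that assumes the existence of such a $G'$, I would argue as follows. Suppose some $HL^k(G)$ contains a connected subgraph $G'$ that has a $C_m$ with $m \geq n$ but $G'\not\cong C_m$. Set $m^\ast = cr(G')$; then $m^\ast \geq m \geq n$. Moreover $G'\not\cong C_{m^\ast}$, since a cycle contains no shorter cycle as a subgraph (if $G'$ were $C_{m^\ast}$ it could only contain $C_{m^\ast}$, forcing $m=m^\ast$ and hence $G'\cong C_m$, a contradiction). Thus $G'$ satisfies the hypotheses of Theorem~\ref{key1} with circumference exactly $m^\ast\geq n$, so $HL^k(G)$ has a sequence that diverges by order. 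Since $\{HL^{k+j}(G)\}_j$ is a tail of the sequence of $G$, the orders $|V(HL^i(G))|$ are unbounded as well, and $G$ diverges by order. This direction is routine.

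The forward direction, that divergence by order \emph{forces} such a $G'$ to appear, is the substantive content, and I would attack it by contraposition. Assume that for every $k$ no connected subgraph of $HL^k(G)$ of the stated kind exists; equivalently, in each $HL^k(G)$ every cycle of length at least $n$ is itself a component (a free-standing $C_\ell$ with $\ell\geq n$), and every remaining component has circumference strictly less than $n$. The goal is then to show $|V(HL^k(G))|$ stays bounded. Two reductions are available at once. First, since edges in distinct components are never $P_n$-adjacent, $HL$ acts component-wise and $|V(HL^k(G))| = \sum_C |V(HL^k(C))|$, so it suffices to bound each component's orbit, and we may take $G$ connected. Second, the exact identity $|V(HL(H'))| = |E(H')|$ gives, for a connected $H'$ with cycle rank $\beta(H') = |E(H')| - |V(H')| + 1$, the growth relation $|V(HL(H'))| - |V(H')| = \beta(H') - 1$; hence a tree loses order, a unicyclic graph preserves it, and only components with $\beta\geq 2$ can create order. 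Since an isolated cycle $C_\ell$ with $\ell\geq n$ is fixed by $HL$ (once $\ell\geq n$, any two of its adjacent edges lie on a common $P_n$, and no others are $P_n$-adjacent), all order growth under the contrapositive hypothesis must be driven by components of circumference $<n$ and cycle rank at least $2$.

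This is exactly where the argument stalls. The remaining claim is that iterated $HL$ cannot produce unbounded order from components that are simultaneously of bounded circumference ($<n$) and of cycle rank at least $2$ without ever assembling a non-isolated long cycle. Merely bounding the cycle rank at each step does not suffice, since a steady increase of the order by a fixed amount at every step already yields divergence; what one really needs is a structural dichotomy showing that such ``tangled but short'' components either eventually shed their excess cycle rank (collapsing to tree-like or unicyclic pieces whose order then decreases or stabilises) or else are driven by $HL$ to spawn a cycle of length at least $n$ with further structure attached, that is, a forbidden $G'$, contradicting the hypothesis. Establishing this dichotomy seems to require a genuine classification of how $HL$ transforms the cyclic structure of small-circumference graphs, and it is precisely the phenomenon responsible for the intricate disconnected limit graphs of \cite{limits} and for the difficulty of characterising $\Lambda_n$ in general. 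I therefore expect this analysis of circumference-$<n$ components to be the main obstacle, and I would approach it by first settling the unicyclic and $\beta=2$ cases, where the interaction between a bounded number of short cycles and the path-shrinking behaviour of $HL$ can be tracked explicitly, before attempting the general bound.
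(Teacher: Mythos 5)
This statement is offered in the paper only as a conjecture: the paper contains no proof of it, so there is no argument of the author's to compare yours against. What the paper does supply is exactly the half you prove. Your backward direction is correct and is, in substance, the paper's own justification for posing the conjecture: Theorem~\ref{key1} is the sufficiency, and the sentence following it explicitly says the conjecture asserts necessity. Your one added step --- replacing the hypothesized $m$ by $m^\ast = cr(G')$ and checking $G' \not\cong C_{m^\ast}$ (a cycle contains no cycle of another length, so $G' \cong C_{m^\ast}$ would force $G' \cong C_m$) --- is needed and correct, since Theorem~\ref{key1} requires the circumference itself, not just some long cycle. The tail observation that divergence by order of $HL^k(G)$ gives divergence by order of $G$ is also fine.

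The genuine gap is the forward direction, which you candidly leave open --- appropriately, since it is open in the paper as well. Your reductions there are sound: $HL$ acts componentwise because edges in distinct components are never $P_n$-adjacent; the identity $|V(HL(H'))| - |V(H')| = \beta(H') - 1$ for connected $H'$ correctly isolates components of cycle rank at least $2$ as the only sources of order growth; and $HL(C_\ell) \cong C_\ell$ for $\ell \geq n$ is right. But the plan stalls precisely at the crux, and for a reason worth naming: $\beta$ is not monotone under $HL$, since $|E(HL(H'))|$ counts $P_n$-adjacent pairs and can greatly exceed $|E(H')|$, so a circumference-$<n$ component can keep gaining cycle rank and order indefinitely, and nothing in your sketch excludes the possibility that it does so without ever assembling a cycle of length at least $n$ attached to further structure. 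Ruling that out is the entire content of the conjecture, and the paper gives no mechanism for it; indeed the paper's discussion of the subtle behaviour found in \cite{limits} (bounded-order divergence, components whose sequences interact in unexpected ways) is evidence that $HL$'s action on such ``tangled but short'' components is not yet understood. Your proposed starting point --- settling the unicyclic and $\beta = 2$ cases first --- is reasonable, but as written the proposal proves only the implication the paper already has, and the substantive implication remains unproven.
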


Proving the above conjecture is just one step in understanding the structures that cause divergence. In particular, a characterization of the graphs $G$ whose sequence ends up satisfying the condition of Theorem \ref{key1} would be beneficial. 

We now provide another type of graph whose sequence diverges by order. Let the graph $CL(x, y, z)$ be the graph with order $x+y+z+1$ composed of three vertex disjoint paths of orders $x, y$ and $z$ respectively, where one pendent vertex of each path is adjacent to the same vertex. Observe that $CL(1, 1, 1)$ is the claw $K_{1, 3}$. 

\begin{theorem}\label{key2}
For every $n \leq 2k$ where $k + 1 < n$, the sequence of $CL(k, k, n-k-1)$ diverges by order.
\end{theorem}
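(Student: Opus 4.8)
The plan is to follow the same template as the preceding divergence results: iterate $HL$ until some iterate contains a connected subgraph that is a cycle $C_m$ with $m\ge n$ but is not isomorphic to $C_m$, and then quote Theorem \ref{key1}. The substance is to show that the branching of $CL(k,k,n-k-1)$ forces a cycle to appear and then grow past length $n$, with the growth fueled by the length $2k\ge n$ stored in the two long legs.

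First I would compute $HL(CL(k,k,n-k-1))$ exactly. Write $c$ for the center, $\alpha_1,\dots,\alpha_k$ and $\beta_1,\dots,\beta_k$ for the two legs of length $k$, and $\gamma_1,\dots,\gamma_{n-k-1}$ for the short leg (nonempty since $k+1<n$). The point is to identify the $P_n$-adjacencies. Consecutive edges within a leg are $P_n$-adjacent, since the path through $c$ formed by the two long legs has $2k\ge n$ edges, so a window of $n-1$ consecutive edges containing any prescribed adjacent pair exists. The three edges $\alpha_1,\beta_1,\gamma_1$ at $c$ are pairwise $P_n$-adjacent: the pair $\alpha_1,\beta_1$ lies on that same long path (here $n\le 2k$ lets one split $n-1=j+\ell$ with $1\le j,\ell\le k$), while $\alpha_1,\gamma_1$ and $\beta_1,\gamma_1$ each lie on a path of exactly $n-1$ edges obtained from one long leg together with the short leg. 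No other pairs share a vertex, so I would conclude that $HL(CL(k,k,n-k-1))$ is a triangle $\alpha_1\beta_1\gamma_1$ carrying three pendant paths, of lengths $k-1$, $k-1$ and $n-k-2$; call such a configuration a \emph{decorated cycle}.

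The core step is a lemma on the action of $HL$ on a decorated cycle $C_\ell$ with $\ell<n$. The local mechanism, already visible above, is that the initial edge of each pendant path becomes $P_n$-adjacent to \emph{both} cycle-edges at its attachment vertex (the hypothesis $n\le 2k$ guarantees enough pendant length to complete the required $P_n$); this subdivides the cycle at that vertex, strictly increasing the circumference, while the remaining pendant edges persist as shorter pendant paths and the edges of the former cycle reappear as chords of the new, longer cycle. I would therefore set up an induction showing that the circumference of $HL^j(CL(k,k,n-k-1))$ strictly increases in $j$ while the graph always retains at least one chord or pendant edge, so that at the first $j$ with circumference $m\ge n$ the relevant component is a cycle $C_m$ $(m\ge n)$ that is not isomorphic to $C_m$; Theorem \ref{key1} then yields divergence by order. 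Note the growth cannot simply stall at a pure cycle, since that would make the sequence converge, contradicting what is being proved.

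The hard part will be the bookkeeping inside this lemma. When $\ell<n$ the cycle-edges are $P_n$-adjacent only through paths that run out along the pendants, so I must track the pendant lengths carefully, and once chords are present the exact set of $P_n$-adjacencies — and hence the precise rate of growth of the circumference — becomes delicate. A cleaner packaging that avoids step-by-step tracking may be to prove directly that some iterate contains $F_m$ with $m\ge n$ and invoke Theorem \ref{char1}, or contains $G^1_m$ with $m\ge n$ and invoke Theorem \ref{secondfamily}: indeed the triangle-chorded hexagon produced already at the second iteration is an $F_6$, and the triangle that seeds the construction supplies exactly the distance-two chord that $F_m$ requires. I would attempt the $F_m$ route first, aiming to show that the chorded cycle grows in lockstep with its chord so that an $F_m$ with $m\ge n$ eventually embeds.
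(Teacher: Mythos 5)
Your plan defers its entire substance to the ``decorated cycle'' lemma, and that lemma as you state it cannot be carried through. Two concrete problems. First, the stall case is handled circularly: you write that the growth ``cannot simply stall at a pure cycle, since that would make the sequence converge, contradicting what is being proved.'' You cannot invoke the theorem's conclusion inside its own proof; ruling out stalling is precisely the content that has to be established. Second, your termination criterion --- iterate until the circumference first reaches some $m \geq n$ and then apply Theorem \ref{key1} --- is out of reach of the growth mechanism you describe. The subdivision-at-roots dynamics is fueled by the three pendant paths, and the short pendant (order $d-1$ after one iteration, where $d = n-k-1$) is exhausted after $d$ iterations, at which point the cycle has length only $3d = 3(n-k-1)$. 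Since $k$ may be as large as $n-2$, this can be as small as $3$, far below $n$; so ``the first $j$ with circumference $\geq n$'' is never reached while your invariant holds, the growth pattern changes once the short arm dies (the cycle stops being subdivided at that root), and analyzing the continuation amounts to reproving Manjula's theorem from scratch. The same objection applies to your $F_m$ fallback: the distance-two chord condition defining $F_m$ must be re-verified after every subdivision, you give no argument that it persists, and the required $P_n$-adjacencies fail once the pendants are short.

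The paper's proof keeps your first step --- it computes $HL(CL(k,k,d))$ exactly as the triangle with pendant paths of orders $k-1$, $k-1$, $d-1$ --- but then sidesteps both difficulties at once. It tracks only a unicyclic \emph{subgraph} of each iterate: since $G_0 \subseteq G$ implies $HL(G_0) \subseteq HL(G)$, the chords you worry about (the inner triangle $uv$, $uw$, $vw$ and its descendants) are simply discarded, so no chord bookkeeping is ever performed. The invariant is exact: after $m$ iterations there is a unicyclic subgraph whose cycle contains $a$, $b$, $c$ pairwise at distance $m$, with pendant paths of orders $k-m$, $k-m$, $d-m$; it propagates because the first pendant edge at $a$ lies on a path of order exactly $(k-m) + (m+1+m) + (d-m) = n$, making it $P_n$-adjacent to both incident cycle edges. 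Crucially, the paper stops at iteration $d$, long before the circumference need reach $n$: the $d$-th iterate contains $G^{k-d}_{3d}$, and since $(k-d) + 3d = k + 2d > n$, Theorem \ref{secondfamily} (Manjula) --- not Theorem \ref{key1} --- yields divergence by order. Manjula's theorem is exactly the packaged form of the post-exhaustion dynamics your plan would otherwise have to redo by hand; the idea your proposal is missing is that it applies to a cycle-plus-pendant of total order exceeding $n$, with no requirement that the cycle itself ever have length at least $n$.
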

\begin{proof}
Set $d=n-k-1$. We start by noticing that $HL(CL(k, k, d))$ is isomorphic to a unicyclic graph with $C_3$ as its cycle and where each vertex in the cycle is adjacent to a path of order $P_{k-1}$ or $P_{d-1}$. See Figure \ref{fig2.3} for $HL(CL(k, k, d))$ and its indexation.

\begin{figure}[!h]
\begin{center}
\includegraphics[scale=0.55]{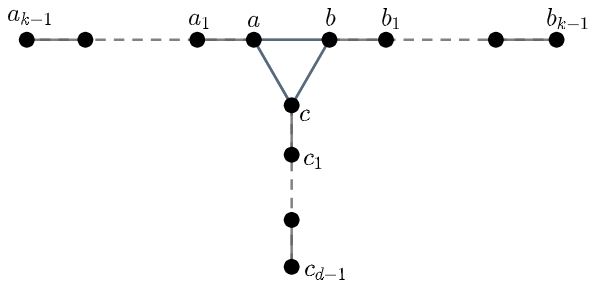}
\end{center}
\caption{The graph $HL(CL(k, k, n-k-1))$.}
\label{fig2.3}
\end{figure}

Notice that there is a path of order $(k-1) + 3 + (d-1)$ (which is equal to $n$) that includes the edges $a_1a$ and $ab$. Similarly, there exists a similar path of order at least $n$ that contains the edges $a_1a$ and $ac$. In general, we notice that $HL^2(CL(k, k, d))$ has the graph of Figure \ref{fig2.4} as a subgraph. 

\begin{figure}[!h]
\begin{center}
\includegraphics[scale=0.52]{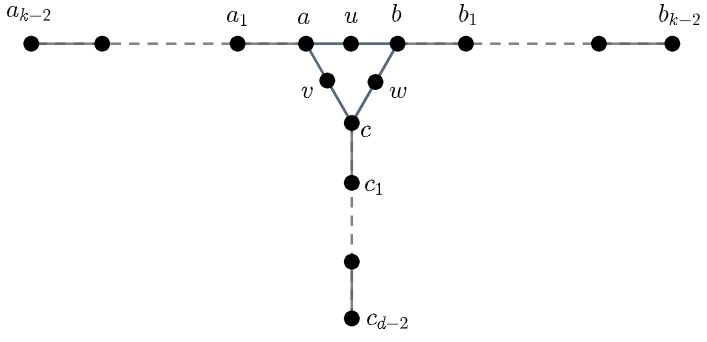}
\end{center}
\caption{A subgraph of $HL^2(CL(k, k, n-k-1))$.}
\label{fig2.4}
\end{figure}

It is important to remark that the graph of Figure \ref{fig2.4} is a subgraph of $HL^2(CL(k, k, d))$. In particular, the edges $uv$, $uw$ and $vw$ belong to $HL^2(CL(k, k, d))$. Nonetheless, this subgraph is enough to cause the sequence to diverge by order. Further, note that Figure \ref{fig2.4} does not use the same labelings that were used in Figure \ref{fig2.3}. For example, the vertex labeled as $a$ in Figure \ref{fig2.4} corresponds to the edge $a_1a$ in Figure \ref{fig2.3}. The case is similar for $b$ and $c$. We do the labeling this way so that we can illustrate better what will happen in $HL^m(CL(k, k, d)).$ Before going into these details, notice that in $HL^2(CL(k,k, d))$, there is a path of order $(k-2) + 5 + (d-2)$ (which is equal to $n$) that includes the edges $a_1a$ and $au$. Similarly, there is a path that contains $a_1a$ and $av$. We can make similar statements for vertex $b$.

In general, assume that the $m^{\text{th}}$ iteration of the sequence has a unicyclic subgraph with three vertices $a, b,$ and $c$ in the cycle, where each of $a$ and $b$ are adjacent to a path of order $k-m$, and $c$ is adjacent to a path of order $d-m$. Further, assume that any two vertices in $\{a, b, c \}$ have distance $m$. If $a_1$ is the vertex adjacent to $a$ in the path, then $a_1a$ will be in a path of order $(k-m) + (m + 1 + m) + (d-m)$, which is equal to $n$. Through similar arguments for the other vertices adjacent with $a$, and by repeating this with $b$ and $c$, we conclude that the $(m+1)^{\text{th}}$ iteration will have a subgraph with these same properties. Finally, since $HL(CL(k, k, d))$ has this property, then every graph in the sequence up to the $d^{\text{th}}$ iteration has it. 

This is enough to prove divergence by order because then $HL^d(CL(k, k, d))$ will have a subgraph isomorphic to $G^{k-d}_{3d}$. Since $k-d+3d > n$, Theorem \ref{secondfamily} guarantees that the sequence of this subgraph diverges by order and thus the sequence of $CL(k, k, n-k-1)$ diverges by order too.
\end{proof}

\begin{corollary}\label{badclaws}
Let $v$ be the vertex with degree $3$ in $CL(x, y, z)$, where $x, y, $ and $z$ are integers. If the edges incident to $v$ are pairwise $P_n$-adjacent, then $CL(x, y, z)$ has a sequence that diverges by order. 
\end{corollary}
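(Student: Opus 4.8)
The plan is to reduce to Theorem \ref{key2} by extracting a balanced sub-claw, after first translating the hypothesis into numerical conditions on the arm lengths. The three edges incident to $v$ pairwise share $v$, so a $P_n$ containing two of them, say the edges heading into the arms of orders $p$ and $q$, is a subpath of the unique maximal path joining the two far ends of those arms, which has $p+1+q$ vertices. Hence two of the central edges are $P_n$-adjacent if and only if $p+q+1\ge n$, and the hypothesis is equivalent to the three inequalities $x+y+1\ge n$, $y+z+1\ge n$, and $x+z+1\ge n$. Relabeling so that $x\le y\le z$, the only binding condition is $x+y+1\ge n$.

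Next I would locate a subgraph $CL(k,k,n-k-1)\subseteq CL(x,y,z)$ satisfying the hypotheses of Theorem \ref{key2}; since a subgraph whose sequence diverges by order forces its host to diverge by order, this finishes the proof. Truncating the two longest arms to length $k$ and the shortest arm to length $n-k-1$ produces such a subgraph provided $k\le y$ and $n-k-1\le x$, together with the requirements $n\le 2k$ and $k+1<n$ of Theorem \ref{key2}. I would therefore pick an integer $k$ with $\max(\lceil n/2\rceil,\,n-1-x)\le k\le \min(y,\,n-2)$. The inequality $n-1-x\le y$ is exactly $x+y+1\ge n$; the inequality $n-1-x\le n-2$ is $x\ge1$; and $\lceil n/2\rceil\le n-2$ holds for all $n\ge 4$. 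The one comparison that can fail is $\lceil n/2\rceil\le y$: it holds automatically when $n$ is even (since $2y\ge x+y\ge n-1$ gives $y\ge n/2$), and when $n$ is odd it fails only if $y=(n-1)/2$.

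It remains to treat this boundary case, which is the main obstacle. When $n$ is odd and $y=(n-1)/2$, the conditions $x\le y$ and $x+y+1\ge n$ force $x=y=(n-1)/2$, while $z\ge(n-1)/2$; here no balanced sub-claw meets $2k\ge n$, so I would instead re-run the iteration in the proof of Theorem \ref{key2} directly. As in that proof, $HL(CL(x,y,z))$ is a triangle on the three central edges carrying pendant paths of orders $x-1$, $y-1$, $z-1$, and I would maintain the invariant that after $m$ iterations there is a unicyclic subgraph with three branch vertices pairwise at distance $m$, each carrying a pendant path of order $\mathrm{arm}-m$. The crucial point is that the witnessing path for each adjacency used to propagate the invariant traverses a shrinking pendant path of order $\mathrm{arm}_i-m$, crosses the grown cycle in $2m+1$ vertices, and continues along a second pendant path of order $\mathrm{arm}_j-m$, for a total of $\mathrm{arm}_i+\mathrm{arm}_j+1$ vertices independent of $m$. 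Thus the invariant propagates exactly when each relevant pair satisfies $\mathrm{arm}_i+\mathrm{arm}_j+1\ge n$, which is precisely the pairwise $P_n$-adjacency hypothesis and does not require $2k\ge n$. Running this until the shortest pendant path is exhausted yields a subgraph isomorphic to some $G^{r}_{m'}$ with $m'+r>n$, and Theorem \ref{secondfamily} gives divergence by order. I expect the delicate part to be verifying that the adjacency-witnessing path length stays equal to $\mathrm{arm}_i+\mathrm{arm}_j+1$ at every step, since this $m$-independence is what lets the weaker sum condition carry the argument all the way through; note that the same invariant argument in fact reproves the generic case as well, so it can serve as a uniform proof if the subgraph reduction is deemed less transparent.
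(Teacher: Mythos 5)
Your generic reduction is essentially the paper's own route: translate pairwise $P_n$-adjacency of the central edges into the arithmetic conditions $p+q+1\ge n$, truncate to a sub-claw of the form $CL(k,k,n-k-1)$, and invoke Theorem \ref{key2} together with the observation that a subgraph diverging by order forces the host to diverge by order. Your bookkeeping, however, is more careful than the paper's. The paper's proof asserts that $z\ge x$ together with $z+x+1\ge n$ implies $2x\ge n$, and this implication is simply false: $CL(3,3,3)$ with $n=7$ satisfies the pairwise-adjacency hypothesis ($3+3+1=7$), yet $2x=6<7$, and the paper's candidate subgraph $CL(x,n-x-1,x)=CL(3,3,3)$ then violates the hypothesis $n\le 2k$ of Theorem \ref{key2}. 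So the boundary case you isolate --- odd $n$ with the two smallest arms both equal to $(n-1)/2$ --- is genuinely outside the sub-claw reduction, and your recognition of it identifies a hole that the paper's own argument glosses over.

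That said, your finish of the boundary case has a gap of its own at its most symmetric point. When $x=y=z=(n-1)/2$ (odd $n$), running your invariant until the arms are exhausted yields $G^{r}_{m'}$ with $r=0$, i.e.\ the bare cycle $C_{3(n-1)/2}$, and Theorem \ref{secondfamily} does \emph{not} give divergence there: $C_m$ with $m\ge n$ is a limit graph (every pair of adjacent edges lies on a $P_n$, so $HL(C_m)\cong C_m$), and the divergence clause of that theorem implicitly requires $r\ge 1$. Note that in the proof of Theorem \ref{key2} one has $r=k-d=2k-n+1\ge 1$ precisely because of the hypothesis $2k\ge n$ --- the very hypothesis you are dispensing with. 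Your subcase $z>(n-1)/2$ is fine, since then $G^{z-x}_{3x}$ has $r=z-x\ge 1$ and $r+m'=z+n-1>n$. For the fully symmetric claw you must instead argue via Theorem \ref{key1}: the iterated graph contains the cycle of length $3(n-1)/2\ge n$ \emph{together with} surviving chord-type edges (the analogues of the edges $uv$, $uw$, $vw$ that the paper's proof of Theorem \ref{key2} explicitly tracks as belonging to $HL^2$ beyond the unicyclic subgraph), so the relevant connected subgraph properly contains a cycle of length at least $n$ and Theorem \ref{key1} gives divergence by order. With that one patch your uniform invariant argument is complete, and --- unlike the proof printed in the paper --- it actually covers every case of the corollary.
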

\begin{proof}
Let $P_1, P_2$ and $P_3$ be the three paths joined by the vertex $v$ where $|V(P_1)| = x, |V(P_2)|=y$, and $|V(P_3)|=z$. Notice that $x + y + 1 \geq n$, so $y \geq n - 1 - x$. For now, assume that $y = n - x - 1$. Since $y + z + 1 \geq n$, it follows that $z \geq x$. As a consequence, the fact that $z + x + 1 \geq n$ implies that $2x \geq n$. Thus, $CL(x, y, z)$ has as a subgraph $CL(x, n-x-1, x)$ where $2x \geq n$. By Theorem \ref{key2}, the claw has a sequence that diverges by order. If it is the case that $y > n - x - 1$, then $CL(x, n- x - 1, z)$ is a subgraph and the same proof applies.
\end{proof}

We will prove one more condition that implies divergence by order. For it, we need one more result due to Chartrand \textit{et. al.} in \cite{main}.

\begin{theorem} \label{char2}
\textbf{Chartrand et. al.} If $G$ is a connected graph, then $HL(G)$ contains at most one component that is not an isolated vertex.
\end{theorem}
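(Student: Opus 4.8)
The plan is to reduce the statement to a connectivity claim about a single induced subgraph of $HL(G)$ and then establish that claim by a path-sliding argument. First I would observe that an edge $e$ of $G$ is a non-isolated vertex of $HL(G)$ if and only if $e$ lies in some copy of $P_n$ in $G$: if $e$ is $H$-adjacent to some $f$, then by definition $e$ and $f$ lie in a common $P_n$; and conversely, any edge of a copy of $P_n$ is $H$-adjacent to a neighboring edge of that copy since $n \geq 3$. Writing $S$ for the set of such edges, every edge of $HL(G)$ joins two vertices of $S$, so the non-isolated components of $HL(G)$ are exactly the components of the induced subgraph $HL(G)[S]$. Thus it suffices to prove that $HL(G)[S]$ is connected whenever $G$ is connected.

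The second ingredient is that a single copy of $P_n$ contributes a connected piece. If $P = v_1 v_2 \cdots v_n$ is a copy of $P_n$ in $G$, then its consecutive edges $v_i v_{i+1}$ and $v_{i+1} v_{i+2}$ are adjacent and lie in the common copy $P$, hence are $H$-adjacent; so the $n-1$ edges of $P$ form a path in $HL(G)$ and lie in a single component. The key local move is that two \emph{shifted} copies of $P_n$ overlap heavily: if $v_n$ has a neighbor $x$ outside $\{v_1, \ldots, v_{n-1}\}$, then $P' = v_2 \cdots v_n x$ is again a copy of $P_n$ sharing the $n-2 \geq 2$ edges $v_2 v_3, \ldots, v_{n-1} v_n$ with $P$; consequently the new edge $v_n x$ is $H$-adjacent to $v_{n-1} v_n$ and lands in the same component as all of $P$. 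I would use this move repeatedly to grow a component along paths in $G$.

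To finish, given two edges $e, f \in S$ lying in copies $P_e$ and $P_f$, I would take a shortest path $W$ in $G$ between a vertex of $P_e$ and a vertex of $P_f$ (available since $G$ is connected), restrict attention to the connected subgraph $G_0 = P_e \cup W \cup P_f$, and slide a window of $n$ consecutive vertices along the walk running through $P_e$, $W$, and $P_f$, producing a chain of copies of $P_n$ in which consecutive copies share at least two edges; by the overlap move each consecutive pair lies in one component, linking $e$ to $f$. The main obstacle will be that this walk need not be a simple path—$G$ may contain short cycles or $W$ may meet $P_e$ or $P_f$—so some windows could fail to be genuine copies of $P_n$. Handling this is where the argument must be specific to $P_n$: I would choose $W$ to be a geodesic, choose the attaching endpoints of $P_e$ and $P_f$ so as to maximize the available length, and treat separately the degenerate case in which an endpoint of a copy cannot be extended to a new vertex, arguing there that the limited local structure still forces the relevant edges into a common component. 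Once every window is replaced by an honest simple subpath of order $n$, the overlap move applies at each step and the chain connects $e$ to $f$, proving that $HL(G)[S]$ is connected and hence that $HL(G)$ has at most one non-trivial component.
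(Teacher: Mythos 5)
The paper does not prove this statement at all; it is imported verbatim from Chartrand \emph{et al.} \cite{main}, so your attempt can only be judged on its own merits, not against an internal argument. Much of your skeleton is correct and complete: the equivalence ``$e$ is non-isolated in $HL(G)$ if and only if $e$ lies in a copy of $P_n$'' is right; the edges of a single copy do induce a connected subgraph of $HL(G)$; and two copies sharing an edge have all their edges in one component (the shared edge is a common vertex of the two connected pieces). Indeed, since every adjacency in $HL(G)$ is witnessed by a common copy, your observations reduce the theorem exactly to the connectivity of the auxiliary graph whose vertices are the copies of $P_n$ in $G$, two copies adjacent when they share an edge. Your instinct that the argument must be specific to $P_n$ is also sound, and worth making precise: for $H\cong C_3$ the analogous statement is false (two triangles joined by a long path yield two nontrivial components of $HL(G)$), so no sliding argument indifferent to the structure of $H$ can exist. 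Moreover, in the case $V(P_e)\cap V(P_f)=\emptyset$ your plan essentially closes: a geodesic $W$ between the vertex sets is internally disjoint from both paths, and attaching it to the longer halves of $P_e$ and $P_f$ gives a simple path with at least $2\lceil (n-1)/2\rceil + |E(W)| \geq n$ edges, along which every window of $n$ consecutive vertices is an honest copy, the first sharing an edge with $P_e$ and the last with $P_f$.

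The genuine gap is the case you yourself flag as ``the main obstacle'' and then defer: when $V(P_e)\cap V(P_f)\neq\emptyset$ (so $W$ is trivial), the two paths can interleave, sharing many vertices but no edges, and then any naive concatenation of ``longer sides'' revisits vertices, so the windows are not copies of $P_n$ at all. Your proposed remedy --- that ``the limited local structure still forces the relevant edges into a common component'' --- is a restatement of the conclusion, not an argument, and it is not even evident that the union $P_e\cup P_f$ contains \emph{any} copy of $P_n$ sharing an edge with each path: establishing this (or, failing that, building a chain through copies using edges outside $P_e\cup P_f$, which your restriction to $G_0=P_e\cup W\cup P_f$ forbids) requires a genuine case analysis of the intersection pattern, e.g.\ switching between the paths at a first or last common vertex and counting the edges available on each side. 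What is missing, concretely, is a proved lemma of the form ``if two copies of $P_n$ in $G$ share a vertex, then their edges lie in a common component of $HL(G)$''; since the disjoint case is routine by your own geodesic construction, this lemma is the actual content of the theorem, and until it is supplied the proposal is a plan rather than a proof.
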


We remind the reader that two graphs $G_1$ and $G_2$ are not equal if and only if $V(G_1) \not = V(G_2)$ or $E(G_1) \not = E(G_2)$, and that this is possible even if $G_1 \cong G_2$. Our next result is that the sequence for $G$ will diverge by order if $G$ contains two distinct subgraphs from the family $\delta_n = \{G^r_m \text{ : }r+m=n \}$.

\begin{theorem}
Let $G_1, G_2\in \delta_n$ be subgraphs of the same component of $G$. If $G_1 \not = G_2$, then $G$ diverges by order.
\end{theorem}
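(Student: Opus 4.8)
The plan is to show that it suffices to analyze the smallest connected subgraph containing both lollipops, and then to locate inside it (or inside a bounded iterate of it) one of the divergent structures already established in Theorems~\ref{secondfamily}, \ref{char1}, \ref{key1} or Corollary~\ref{badclaws}.

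First I would reduce the problem. Since $G_1$ and $G_2$ lie in the same component, choose a shortest path $Q$ joining them (with $Q$ trivial if they already meet) and set $H = G_1 \cup G_2 \cup Q$, a connected subgraph of that component. By the observation that a subgraph diverging by order forces the whole graph to diverge by order, it is enough to prove that $H$ diverges by order. Writing $m_i$ and $r_i$ for the cycle size and tail length of $G_i$ (so $m_i + r_i = n$), note that because $G_1 \not= G_2$ the graph $H$ strictly contains a single critical lollipop, so it carries ``extra'' edges beyond any one $G^{r_i}_{m_i}$; this surplus is exactly what I intend to convert into divergence.

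Next I would clear the direct cases. If $cr(H) \geq n$, then since the cycle-vertex that carries the tail of $G_1$ has degree at least $3$ in $H$, we have $H \not\cong C_{cr(H)}$, and Theorem~\ref{key1} gives divergence by order. Otherwise every cycle of $H$ has size less than $n$, so both tails are nonempty. I would then look for a pendant path of $H$ that is edge-disjoint from one of the cycles and longer than that cycle's critical tail length: any such path exhibits a subgraph $G^{r'}_{m}$ with $m + r' > n$, so Theorem~\ref{secondfamily} applies. This is where the ``distinct cycles'' configurations are handled --- when $G_1$ and $G_2$ have different cycles, the connecting path $Q$ together with an arc of the second cycle and its tail typically forms such an over-long pendant path off the first cycle. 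Similarly, if the two tails emanate from a common cycle vertex $w$ and are long enough that the path through $w$ joining their tips has at least $n$ vertices, then the three edges at $w$ are pairwise $P_n$-adjacent and Corollary~\ref{badclaws} finishes the case.

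The remaining, and hardest, case is when $G_1$ and $G_2$ share the same cycle $C_m$ (so $r_1 = r_2 = r = n-m$) but attach tails of length $r$ at distinct vertices, or at a common vertex with tails too short for Corollary~\ref{badclaws}; here no over-long pendant path exists and $cr(H) = m < n$, so iteration is unavoidable. My plan is to track $HL^j(H)$ and argue, as in the single-lollipop behaviour underlying Theorem~\ref{secondfamily}, that each iteration enlarges the common cycle while shortening the attached tails, \emph{and} that the extra tail or edges persist along the way. Then at the first iteration at which the growing cycle reaches size at least $n$ it still carries an attached pendant or chord, so $HL^j(H)$ is not a bare cycle and either Theorem~\ref{key1} applies or the configuration is a chorded cycle $F_{m'}$ with $m' \geq n$ (Theorem~\ref{char1}) or an over-critical lollipop (Theorem~\ref{secondfamily}). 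I expect the main obstacle to be precisely this bookkeeping: proving an inductive description of the iterates $HL^j(H)$ in the shared-cycle case and, in particular, showing that the surplus structure contributed by the second lollipop is not destroyed as the cycle grows, so that the critical iterate provably fails to be a simple cycle while attaining circumference at least $n$.
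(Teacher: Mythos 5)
There is a genuine gap, and you have located it yourself: the shared-cycle case is not a bookkeeping nuisance but the entire content of the theorem, and your proposal leaves it unproven. Your auxiliary tools do not cover it. Consider $G_1 \cong G_2$ sharing the same cycle of size $m$ and the same root, with tails of equal length $r = n-m$ that coincide on their first $j$ edges and then branch at a vertex $w$: the longest pendent path off the cycle still has order exactly $r$, so the union contains no $G^{r'}_{m'}$ with $m' + r' > n$ and Theorem \ref{secondfamily} is silent; the path joining the two branch tips through $w$ has order $2(r-j)+1$, which need not reach $n$, so Corollary \ref{badclaws} need not apply; and $cr(H) = m < n$ blocks Theorem \ref{key1} at iteration zero. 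Your ``distinct cycles'' case also rests on the word ``typically'' --- when the two cycles share vertices, the connecting path $Q$ is trivial and no over-long pendent path need exist. So the proposal, as written, settles only the easy configurations and defers exactly the cases the theorem is really about.

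The paper's proof shows that the iterative tracking you expect to be the main obstacle is avoidable entirely. Since a $P_n$ in a subgraph is a $P_n$ in the supergraph, $HL$ is monotone with respect to subgraphs, and Theorem \ref{secondfamily} describes the iterates of each lollipop \emph{separately}: with $r_1 \leq r_2$, after $r_1$ iterations $HL^{r_1}(G_1) \cong C_n$ sits inside $HL^{r_1}(G)$. If $r_1 < r_2$, the image of $G_2$ is still a lollipop on $n$ vertices, so the component containing the $C_n$ (unique and nontrivial by Theorem \ref{char2}) has a vertex of degree at least $3$ and hence is not isomorphic to $C_n$; if instead $G_1 \cong G_2$, then $G_1 \neq G_2$ forces $E(G_1) \neq E(G_2)$, which yields two distinct copies of $C_n$ in the same component of $HL^{r}(G)$. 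Either way the hypothesis of Theorem \ref{key1} holds at that single critical iteration and divergence by order follows. No persistence-of-surplus induction on the union is needed: monotonicity hands you the images of $G_1$ and $G_2$ at iteration $\min(r_1, r_2)$ for free, and Theorem \ref{key1} does all the remaining work.
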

\begin{proof}
We may assume that $G$ is connected. Set $G_1 \cong G^{r_1}_{m_1}$ and $G_2 \cong G^{r_2}_{m_1}$. We first consider the case where $G_1\not \cong G_2$. Without loss of generality assume that $r_1 < r_2$. Theorem \ref{secondfamily} implies that $HL^{r_1}(G_1) \cong C_n$. Since $r_1< r_2$, we have that $HL^{r_1}(G_2) \not \cong C_n$. Thus $HL^{r_1}(G)$ will have a subgraph isomorphic to $C_n$, but since the sequence of $G_2$ does not terminate, it follows that $HL^{r_1}(G) \not \cong C_n$. By Theorem \ref{key1}, the sequence diverges by order.

Assume then that $G_1 \cong G_2$. Set $m = m_1 = m_2$ and $r = r_1 = r_2$. Note that $HL^r(G_1) \cong HL^r(G_2) \cong C_n$, and so every edge in both $G_1$ and $G_2$ will be the vertices of a cycle of size $n$ in $HL(G)$. Further, observe that if $E(G_1) = E(G_2)$, then $V(G_1) = V(G_2)$ since $G_1\cong G_2$, so it must be that $E(G_1)\not = E(G_2)$. Thus, $HL^r(G)$ will contain two different cycles of size $n$ in the same component (we know that they are in the same component by Theorem \ref{char2}). This satisfies the condition of Theorem \ref{key1}, and so $G$ has a sequence that diverges by order.
\end{proof}

The natural generalization of this theorem is: if $G$ has a component containing distinct subgraphs $G_1$ and $G_2$ such that $G_1, G_2\in \Lambda_n$, then $G$ has a sequence that diverges by order. Nonetheless, Figure \ref{fig2.5} shows a counterexample to this. For a conjecture, we need to make one of these conditions stronger.

\begin{conjecture}
Let $G$ be a connected graph, and let $G_1$ and $G_2$ be subgraphs of $G$. If $G_1 \not \cong G_2$ and $G_1, G_2\in \Lambda_n$,  then $G$ has a sequence that diverges by order.
\end{conjecture}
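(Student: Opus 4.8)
The plan is to argue by contradiction, assuming that $G$ does not diverge by order, and to extract from this a strong structural restriction on the iterates $HL^k(G)$ using the tools already established. First I would record the monotonicity of the operator: if $G_i \subseteq G$ then $HL^k(G_i) \subseteq HL^k(G)$ for every $k$, since any copy of $P_n$ witnessing $P_n$-adjacency inside $G_i$ is also a copy inside $G$. Because $G_1, G_2 \in \Lambda_n$, I would choose $N$ so large that $HL^k(G_1)$ and $HL^k(G_2)$ have stabilized to their respective limit graphs $L_1$ and $L_2$ for all $k \geq N$; by monotonicity both $L_1$ and $L_2$ embed as subgraphs of $HL^N(G)$, and since $G$ is connected, Theorem \ref{char2} forces all of their edges into the single nontrivial component $K$ of $HL^N(G)$.

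Next I would use Theorem \ref{key1} in contrapositive form: if $G$ does not diverge by order, then for every $k$ and every connected subgraph $G' \subseteq HL^k(G)$ with $cr(G') \geq n$ one must have $G' \cong C_{cr(G')}$; in particular $K$ is either a cycle $C_m$ with $m \geq n$ or satisfies $cr(K) < n$. This cleanly splits the argument into two regimes. In the long-cycle regime $K \cong C_m$ with $m \geq n$, I would mirror the proof of the already-established theorem on distinct members of $\delta_n$. After verifying that the only limit graphs occurring as subgraphs of a cycle are the whole cycle and the edgeless graph, we get $L_1, L_2 \in \{C_m, \text{edgeless}\}$, and any nonempty $L_i$ must in fact equal $K$. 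Using that $G_1 \not\cong G_2$, so that the two sequences either reach their cyclic limits at different iteration counts or deposit genuinely distinct copies of $C_m$, one obtains, one step before stabilization, either a cycle $C_m$ sitting as a proper subgraph of a strictly larger connected graph or two distinct $C_m$'s in one component. Either configuration meets the hypothesis of Theorem \ref{key1}, producing divergence by order and contradicting the assumption.

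The hard part will be the remaining regime, where $cr(K) < n$ for all large $k$. Here $L_1$ and $L_2$ are non-isomorphic convergent subgraphs fused inside a single short-circumference component, and I would want to show that iterating this fusion must eventually create either a long cycle carrying extra structure (triggering Theorem \ref{key1}) or a subgraph from $\delta_n$ with $m+r>n$ (triggering Theorem \ref{secondfamily}). The obstruction is that this is precisely the exotic regime: the limit graphs of Britto-Pacumio and those of Figure \ref{fig1.4} live here, and no structural classification of short-circumference limit graphs is available. Indeed, such a classification is essentially the full characterization of $\Lambda_n$ that the paper is working toward, so I do not expect this case to yield without substantial new structural input.

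Finally, I would emphasize a constraint that any correct argument must respect: it must genuinely exploit non-isomorphism rather than mere distinctness, since the counterexample of Figure \ref{fig2.5} shows that two distinct but isomorphic convergent subgraphs can coexist stably inside a single component. The crux is therefore to prove that two \emph{non-isomorphic} limit graphs can never be welded into one stable limit-graph component, and isolating the invariant that distinguishes this case from the isomorphic one is, I expect, the decisive and most delicate step.
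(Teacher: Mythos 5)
This statement is one of the paper's open conjectures: the paper offers no proof of it, only the counterexample of Figure \ref{fig2.5} that refutes the weaker ``distinct subgraphs'' version and motivates strengthening distinctness to non-isomorphism. So there is no official argument to compare you against, and the only question is whether your proposal settles the conjecture on its own. It does not, as you yourself concede. Your reduction is reasonable as far as it goes: monotonicity of $HL$ under subgraph containment, Theorem \ref{char2} to force both limits into the single nontrivial component $K$, and the contrapositive of Theorem \ref{key1} to split into the regimes $K \cong C_m$ with $m \geq n$ versus $cr(K) < n$. The long-cycle regime does faithfully mirror the paper's proof of the theorem on two distinct members of $\delta_n$. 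Even there, though, one case slips through your ``different iteration counts or genuinely distinct copies'' dichotomy: two non-isomorphic subgraphs can have \emph{identical} iterates from some point on (for instance when they differ only in vertices or edges whose traces become isolated vertices, which vanish at the next iteration, since $HL(G_1 \cup K_1) = HL(G_1)$), so $L_1$ and $L_2$ may be deposited as the same copy of $C_m$ at the same step, and your argument then produces no contradiction; divergence would have to be extracted from $G$ by other means.

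The decisive gap is the one you name and then abandon: the regime $cr(K) < n$. Every known non-cycle limit graph --- the sporadic graphs of Figure \ref{fig1.4}, the disconnected Britto-Pacumio limits --- necessarily has circumference below $n$ (otherwise Theorem \ref{key1} would apply), so this is not a degenerate leftover case but the entire substance of the conjecture, and nothing in the paper's toolkit (Theorems \ref{secondfamily}, \ref{char1}, \ref{key1}, \ref{key2}) forces divergence when two non-isomorphic short-circumference limit structures are fused in one component. Worse, your own closing observation cuts against the statement as literally written: since $G$ is a subgraph of itself, if the convergent graph of Figure \ref{fig2.5} contains any \emph{proper} convergent subgraph $G'$ (and the paper asserts every member of $\Lambda_n$ contains a minimally $n$-convergent subgraph), then the pair $(G', G)$ is non-isomorphic with both members in $\Lambda_8$, contradicting the conjecture unless that graph is itself minimally $8$-convergent or ``subgraph'' is tacitly read as proper; similarly, isolated-vertex padding ($G_1 \cup K_1 \in \Lambda_n$ whenever $G_1 \in \Lambda_n$, with $G_1 \cup K_1 \not\cong G_1$) manufactures cheap non-isomorphic pairs inside convergent graphs. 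So any correct proof must first pin down a refined hypothesis --- an invariant distinguishing essentially different convergent substructures, not mere non-isomorphism --- before the welding argument you sketch can even be well-posed. As it stands, your proposal is an honest partial reduction plus a correct diagnosis of where the difficulty lives, but it is not a proof.
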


\begin{figure}[!h]
\begin{center}
\includegraphics[scale=0.5]{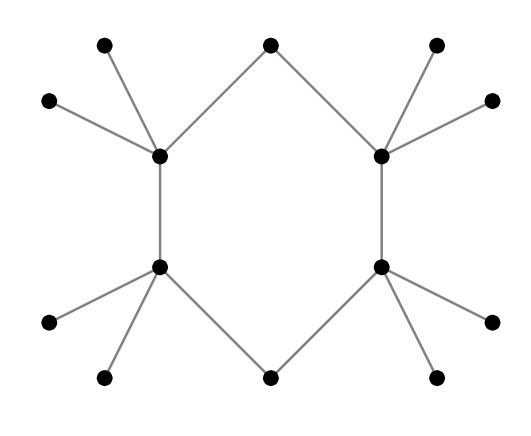}
\end{center}
\caption{A graph whose sequence converges when $H\cong P_8$.}
\label{fig2.5}
\end{figure}

\section{Properties of minimally $n$-convergence}

\subsection{Basic properties}

We start by reminding the reader of the definition of minimally $n$-convergence.

\begin{definition}
A graph $G$ is said to be minimally $n$-convergent if $G\in \Lambda_n$ but every proper subgraph of $G$ is not in $\Lambda_n$. Further, let $\lambda_n$ be the set of all minimally $n$-convergent graphs.
\end{definition}

Notice that every graph $G\in \Lambda_n$ has a subgraph $G'\subseteq G$ such that $G'\in \lambda_n$. This is why studying minimally $n$-convergent can be far reaching: obtaining properties about graphs in $\lambda_n$ gives us properties about some subgraph of every graph in $\Lambda_n$. We will spend the rest of the paper developing results related to minimally $n$-convergence. Although the next result is easy to obtain, it provides a template for how to use the definition of  minimally $n$-convergence in proofs.

\begin{lemma} \label{key3}
If $G\in \lambda_n$, then every edge in $G$ is in a copy of $P_n$.
\end{lemma}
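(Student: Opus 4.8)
The plan is to argue by contradiction using edge deletion, exploiting the minimality hypothesis. Suppose $G \in \lambda_n$ but some edge $e$ of $G$ lies in no copy of $P_n$. I will show that the proper subgraph $G - e$ also lies in $\Lambda_n$, which contradicts the assumption that $G$ is minimally $n$-convergent, and hence forces every edge to lie in a copy of $P_n$.

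First I would record the effect of $e$ on the first $H$-line graph. Since $e$ lies in no copy of $P_n$, it is $P_n$-adjacent to no other edge, so the vertex $e$ is isolated in $HL(G)$. Moreover, because no copy of $P_n$ in $G$ uses $e$, the copies of $P_n$ in $G$ and in $G - e$ coincide; consequently the $P_n$-adjacencies among the remaining edges are completely unaffected by deleting $e$. It follows that $HL(G)$ is exactly $HL(G - e)$ together with the extra isolated vertex $e$.

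Next I would propagate this comparison through one further iteration. An isolated vertex contributes no edges, and hence no vertices, to the next $H$-line graph; therefore $HL^2(G) \cong HL^2(G - e)$, and by induction $HL^k(G) \cong HL^k(G - e)$ for every $k \geq 2$. Thus the two sequences share a common tail. Since $G \in \Lambda_n$, its sequence stabilizes to a nonempty limit graph, and the identical tail forces the sequence of $G - e$ to stabilize to that same nonempty graph. In particular $G - e$ converges, so $G - e \in \Lambda_n$, contradicting the minimality of $G$.

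The step I expect to require the most care is the claim that deleting $e$ leaves the $P_n$-adjacency structure of the surviving edges intact; this is exactly where the hypothesis that $e$ lies in no copy of $P_n$ is essential, since otherwise removing $e$ could destroy copies of $P_n$ that two other edges rely on for their adjacency, and the two $H$-line graphs would no longer agree. A secondary point to treat cleanly is the distinction between convergence and termination: I must confirm that the shared tail is nonempty, which holds because $G$ converges to a nonempty limit rather than terminating, so $G - e$ genuinely belongs to $\Lambda_n$ rather than merely terminating.
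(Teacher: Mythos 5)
Your proposal is correct and takes essentially the same route as the paper's proof: both observe that $e$ is an isolated vertex in $HL(G)$, that $HL(G-e) = HL(G)-e$ because no copy of $P_n$ uses $e$, and then conclude $G-e \in \Lambda_n$, contradicting minimality. Your explicit propagation through the iterated sequence and your care about the convergence/termination distinction simply spell out details the paper leaves implicit in its claim that deleting an isolated vertex preserves membership in $\Lambda_n$.
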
 
\begin{proof}
For a contradiction, assume that the edge $e$ is not in a $P_n$ (i.e. it is not in a subgraph isomorphic to $P_n$). First, notice that $e$ is an isolated vertex in $HL(G)$, as otherwise there would exists an edge $f\in E(G)$ such that $e$ is $P_n$-adjacent to $f$, meaning that there exists a path $P_n$ containing both $e$ and $f$ which would contradict our assumption that $e$ is not in a $P_n$. Thus, the edge $e$ is an isolated vertex in $HL(G)$. From $G\in \Lambda_n$, we know that $HL(G)\in \Lambda_n$, and since $e$ is an isolated vertex in $HL(G)$, it follows that $HL(G)-e\in \Lambda_n$ also. 

Since $e$ is not $P_n$-adjacent to $f$ for every $f\in E(G)$, it follows that $E(HL(G-e)) =E(HL(G) - e)$. Also, from the definition of $H$-line graphs we also know that $V(HL(G-e)) = V(HL(G)-e)$, so we conclude that $HL(G-e) = HL(G)-e$. But then $HL(G-e)\in \Lambda_n$, so $G-e\in \Lambda_n$. This contradicts that $G$ is minimally $n$-convergent thus finishing the proof.
\end{proof}

Lemma \ref{key3} confirms the notion that every edge in $G$ is needed.

\begin{lemma}\label{key6}
Let $G\in \lambda_n$, and assume both that $G\not \in \delta_n$ and that $G$ is not a cycle. Further, let $P$ be a non-extendable path in $G$ that has order at least $n$. If $p_1$ and $p_2$ are pendent vertices in $P$, then $p_1$ and $p_2$ are pendent vertices in $G$.
\end{lemma}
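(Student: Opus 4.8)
The plan is to argue by contradiction, assuming that one endpoint, say $p_1$, fails to be pendent in $G$, i.e. $\deg_G(p_1) \geq 2$; the argument for $p_2$ is identical by symmetry. Write $P = p_1 v_2 v_3 \cdots v_k$ with $p_2 = v_k$ and $k \geq n$. The first observation is that non-extendability of $P$ forces every neighbor of $p_1$ to lie on $P$: if $p_1$ had a neighbor outside $V(P)$, we could lengthen $P$. Hence $p_1$ has a neighbor $v_j$ with $3 \leq j \leq k$, and the chord $p_1 v_j$ together with the subpath $p_1 v_2 \cdots v_j$ produces a cycle of size $j$ in $G$.

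The key structural fact I would establish first is that $G$ contains no cycle of order at least $n$. Indeed, for $m \geq n$ the cycle $C_m$ is a limit graph: one checks that $HL(C_m) \cong C_m$, since two edges of $C_m$ are $P_n$-adjacent exactly when they are adjacent (any two adjacent edges fit in a run of $n$ consecutive vertices because $m \geq n$). Thus $C_m \in \Lambda_n$, and were $C_m$ a subgraph of $G$, then since $G$ is not a cycle this copy would be a \emph{proper} subgraph lying in $\Lambda_n$, contradicting $G \in \lambda_n$. Applying this to the cycle just produced gives $j \leq n-1$, and since $k \geq n$ we automatically get $j < k$, so the subpath $v_{j+1} \cdots v_k$ is a nonempty tail attached to the cycle at the cycle-vertex $v_j$.

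This tail-plus-cycle configuration is exactly a copy of $G^{k-j}_{j}$, with cycle size $m = j$ and path order $r = k - j$, so that $m + r = k \geq n$. I would then split on the value of $k$ and invoke Manjula's result, Theorem \ref{secondfamily}. If $k > n$, then $m + r > n$, so $G^{k-j}_j$ has a sequence that diverges by order; since it is a subgraph of $G$, the sequence of $G$ diverges by order, contradicting $G \in \Lambda_n$. If $k = n$, then $G^{k-j}_j \in \delta_n \subset \lambda_n$; because $G \notin \delta_n$ this copy cannot equal $G$, so it is a proper subgraph of $G$ lying in $\Lambda_n$, again contradicting minimality. In every case we reach a contradiction, so $p_1$, and by symmetry $p_2$, must be pendent in $G$.

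The step I expect to be the main obstacle is making the boundary case $k = n$ airtight, since there the forbidden subgraph does not diverge but is instead excluded by the standing hypotheses; this is precisely where $G \notin \delta_n$ and ``$G$ is not a cycle'' earn their place in the statement. A secondary point requiring care is the no-large-cycle fact, namely verifying that $C_m$ with $m \geq n$ is genuinely a limit graph and hence in $\Lambda_n$; once that is in hand, the remaining case analysis is routine bookkeeping with the parameters of $G^r_m$.
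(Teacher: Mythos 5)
Your proposal is correct and follows essentially the same route as the paper's proof: non-extendability forces a chord from $p_1$ back into $P$, producing a subgraph isomorphic to $G^{r}_{m}$ with $r+m = k \geq n$, and the cases $k > n$ and $k = n$ are dispatched exactly as in the paper via Theorem \ref{secondfamily} together with the hypotheses $G \notin \delta_n$ and ``$G$ is not a cycle.'' The only (harmless) deviation is the large-cycle case, where the paper invokes Theorem \ref{key1} to obtain divergence by order, whereas you observe directly that $C_m$ with $m \geq n$ is a limit graph, hence a proper subgraph of $G$ lying in $\Lambda_n$, contradicting minimality --- both arguments are valid.
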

\begin{proof}
It is enough to consider the case where $G$ is connected. If $G$ is disconnected, the proof applies to one of its components. Assume, for a contradiction, that $p_1$ is not a pendant vertex in $G$. Since $P$ cannot be extended, then $p_1$ must be adjacent to some vertex $p$ in $P$. If $p = p_2$, then $G$ has a cycle of size at least $n$. By hypothesis, $G$ is not a cycle so $G\not \cong C_n$, so by Theorem \ref{key1}, it follows that $G$ has a sequence that diverges by order, which is a contradiction. If $p \not = p_2$, then the subgraph of $P$ with the edge $p_1p$ is a subgraph $G_0$ isomorphic to $G^r_m$ for some $r$ and $m$. Since $P$ has order at least $n$, it follows that $r+m \geq n$. However, it cannot be the case that $r+m>n$ since $G$ would have a sequence that diverges in order by Theorem \ref{secondfamily}. So $r+m = n$. However, Theorem \ref{secondfamily} implies that $G_0\in \Lambda_n$. And since $G\not \in \delta_n$, then $G_0$ is a proper subgraph of $G$. This contradicts that $G$ is minimally $n$-convergent thus finishing the proof.
\end{proof}

\begin{lemma} \label{newsupport}
If $G\in \lambda_n$, then $HL(G)$ has the same number of components as $G$.
\end{lemma}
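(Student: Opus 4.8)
The plan is to reduce to a single connected component and then combine Lemma \ref{key3} with Theorem \ref{char2}. First I would observe that the $H$-line operation respects the component decomposition of $G$: since any copy of $P_n$ is connected it lies entirely inside one component of $G$, and two edges can be $P_n$-adjacent only if they share a vertex and lie in a common $P_n$, hence only if they lie in the same component. Consequently, if $C_1, \dots, C_t$ are the components of $G$, then $HL(G)$ is the disjoint union $HL(C_1) \sqcup \cdots \sqcup HL(C_t)$, and it suffices to show that each $HL(C_i)$ is a single component. The number of components of $HL(G)$ would then equal $t$, the number of components of $G$.

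Next I would record two preliminary facts about $G \in \lambda_n$. The first is that $G$ has no isolated vertices: if $v$ were isolated, then $v$ contributes no edge, so $HL(G - v) = HL(G)$ as graphs, forcing $HL^k(G - v) = HL^k(G)$ for all $k \geq 1$; thus $G - v$ converges if and only if $G$ does, and minimality is violated by the proper subgraph $G - v$. This guarantees that each $C_i$ contains at least one edge, so each $HL(C_i)$ is nonempty. The second fact is that every vertex of $HL(G)$ is non-isolated: by Lemma \ref{key3} every edge $e$ of $G$ lies in a copy of $P_n$, and since $n \geq 4$ this copy has at least three edges, so $e$ is adjacent within that $P_n$ to some other edge $f$; then $e$ and $f$ are $P_n$-adjacent, making $e$ non-isolated in $HL(G)$.

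Finally, fix a component $C_i$. Because $C_i$ is connected, Theorem \ref{char2} says $HL(C_i)$ has at most one component that is not an isolated vertex. But by the second fact above $HL(C_i)$ has no isolated vertices at all, and by the first fact it is nonempty; hence $HL(C_i)$ consists of exactly one component. Summing over $i$ shows that $HL(G)$ has exactly $t$ components, as required.

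I do not expect a serious obstacle, since the conclusion follows quickly once the two preliminary observations are in place; the only point requiring care is the bookkeeping around isolated vertices. One must verify separately that $G$ itself has no isolated vertices (so that the component counts of $G$ and $HL(G)$ can agree) and that $HL(G)$ has none (so that Theorem \ref{char2} collapses to full connectedness on each component). Conflating these two statements, or overlooking that the decomposition $HL(G) = \bigsqcup_i HL(C_i)$ requires each factor to be nonempty, would be the easiest way to introduce a gap.
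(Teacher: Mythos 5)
Your proof is correct and follows essentially the same route as the paper's: decompose $G$ into components, use Lemma \ref{key3} to rule out isolated vertices in each $HL(C_i)$, and apply Theorem \ref{char2} to conclude each $HL(C_i)$ is connected. Your extra preliminary step showing that $G\in\lambda_n$ has no isolated vertices is a point the paper's proof leaves implicit, and it is a legitimate (indeed necessary) piece of bookkeeping, since an isolated vertex of $G$ would contribute a component to $G$ but none to $HL(G)$.
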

\begin{proof}
Every component in $G$ has every edge in a $P_n$ by Lemma \ref{key3}, so if $G'$ is a component of $G$, it is not possible for $HL(G')$ to have an isolated vertex. Thus, by Theorem \ref{char2}, $HL(G')$ is connected. In other words, every component of $G$ generates exactly one component in $HL(G)$.
\end{proof}

\subsection{Minimally $n$-convergence and unicyclicness}

Knowing the structure of minimally $n$-convergent graphs can facilitate multiple proofs. We have the following conjecture about the structure of these graphs. 

\begin{conjecture} \label{unicyclicMin}
If $G\in \lambda_n$, then $G$ has unicyclic components.
\end{conjecture}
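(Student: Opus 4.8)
The plan is to establish, for each connected component of a graph $G\in\lambda_n$, a dichotomy: the component is either a cycle $C_m$ with $m\ge n$ or a non-cyclic graph of circumference strictly less than $n$, and to show that in both situations the component carries exactly one cycle. Two reductions coming from minimality drive everything. First, because $H\cong P_n$ is connected, $P_n$-adjacent edges share a vertex and hence lie in a common component, so $HL$ acts componentwise and the descendants of a component $C$ in $HL^k(G)$ are exactly $HL^k(C)$; the fate (convergent, terminating, or divergent) of each component is therefore intrinsic. Second, no component may terminate: if a component $C$ reached the empty graph we would have $HL^k(G)=HL^k(G-C)$ for all large $k$, forcing $G-C\in\Lambda_n$ and contradicting minimality; and no component may diverge by order, since a subgraph that diverges by order forces $G$ to diverge. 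Thus every component, on its own, either converges or diverges by the bounded (second) kind, and by Lemma \ref{newsupport} the number of components is already preserved in passing from $G$ to $HL(G)$.

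I would next rule out tree components. The identity $|V(HL(F))|=|E(F)|$ shows that a connected limit graph $F\cong HL(F)$ satisfies $|V(F)|=|E(F)|$ and is therefore unicyclic; in particular no tree is a limit graph, so a convergent tree component would have to acquire a cycle before stabilizing. A tree component cannot be a path, since a path $P_m$ with $m<n$ violates Lemma \ref{key3} (some edge lies in no $P_n$), while $P_m$ with $m\ge n$ terminates and is excluded by the first reduction; hence a tree component has a branch vertex. Using Lemma \ref{key3} and the pendent-path structure of Lemma \ref{key6}, I would analyze such a branch vertex: when its three incident branches are long enough that the three incident edges are pairwise $P_n$-adjacent, Corollary \ref{badclaws} exhibits a claw subgraph that diverges by order, contradicting $G\in\Lambda_n$. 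The delicate case is a branch vertex one of whose branches is short, so that the center edges are not pairwise $P_n$-adjacent (as happens benignly at the attachment vertex of a $G^r_m$); here I expect to have to track how the branch evolves under one application of $HL$ and show that the malignant pairwise-adjacency configuration is eventually produced.

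Finally I would rule out components of cycle rank at least $2$. Such a component is not a single cycle, so Theorem \ref{key1} pins its circumference below $n$, and it must contain two independent short cycles joined through branch vertices. The strategy is to contradict minimality directly or to force divergence: show that either some edge can be deleted leaving the graph in $\Lambda_n$, or that iterating $HL$ produces, inside a component that is not itself a cycle, a cycle of length at least $n$ (invoking Theorem \ref{key1}) or a claw with pairwise $P_n$-adjacent center edges (invoking Corollary \ref{badclaws}). The principal obstacle is dynamical control of how the cycle rank of a component changes under $HL$: because $HL(G)$ is only a spanning subgraph of the line graph $L(G)$, the standard line-graph cycle-rank formulas yield only one-sided bounds, and the benign branch vertices of $G^r_m$ already show that no crude ``branch vertex implies divergence'' principle can hold. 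A second genuine difficulty, and the reason the statement remains a conjecture, is that the second kind of divergence permits distinct components to interact, as in the period-two limit graphs of \cite{limits}; one therefore cannot simply assert that each component of a minimally $n$-convergent graph is itself minimally $n$-convergent, and any complete argument must confront this coupling between components head-on.
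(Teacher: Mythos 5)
First, a point of reference: the statement you are proving is Conjecture \ref{unicyclicMin}, which the paper leaves \emph{open}; there is no proof in the paper to compare against, only partial results in its direction (Lemma \ref{finalkey}, Corollary \ref{smallkey}, and the final theorem under the hypothesis $g(HL(G))>4$). Your proposal is a sensible research plan whose ingredients match the paper's toolkit, and several of its reductions are sound: $HL$ does act componentwise; a component that terminates contradicts minimality via $HL^k(G)=HL^k(G-C)$ for large $k$; divergence by order is inherited by supergraphs; a connected limit graph satisfies $|V(F)|=|E(F)|$ and is hence unicyclic; and Theorem \ref{key1} correctly pins any non-cycle component's circumference below $n$. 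But the proposal is not a proof, and the gaps are exactly at the load-bearing steps. In the tree case, Corollary \ref{badclaws} only fires when the three edges at a branch vertex are \emph{pairwise} $P_n$-adjacent; when one branch is short (the benign configuration at the root of $G^r_m$, or a spider such as $CL(1,1,n-2)$), you concede that you would ``expect'' iteration to eventually produce the malignant configuration, but you give no invariant or tracking argument that forces this --- and since the conjecture's consequence ``no tree converges'' is itself open in the paper, this expectation is precisely the unproven content, not a routine verification.

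The multicyclic case has an additional concrete error: a component of cycle rank at least $2$ need not ``contain two independent short cycles joined through branch vertices'' --- a theta graph has rank $2$ with all its cycles pairwise sharing paths and no two vertex-disjoint cycles, and with all cycles of length below $n$ neither Theorem \ref{key1} nor Corollary \ref{badclaws} applies directly to it. So even the structural setup of your final step fails on theta-like components, and no actual deletion argument or divergence mechanism is produced for them. Finally, you correctly flag the coupling problem --- components of a graph in $\Lambda_n$ may individually diverge by the second kind and converge only jointly, as in the disconnected limit graphs of \cite{limits} --- but flagging it does not neutralize it: your opening dichotomy is stated per component, and without a lemma excluding second-kind-divergent components from minimally $n$-convergent graphs (or handling them directly), the componentwise reduction on which the whole plan rests is incomplete. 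In short: the architecture is reasonable and consistent with the paper's partial progress, but the tree case with a short branch, the theta-graph case, and the component-coupling issue are three genuine missing ideas, and each is plausibly as hard as the conjecture itself.
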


Establishing this conjecture can be very helpful when proving statements about minimally $n$-convergent graphs as it provides us with one and only one cycle to work with. Further, since every graph $G\in \Lambda_n$ has a subgraph in $\lambda_n$, Conjecture \ref{unicyclicMin} would prove that no tree has a convergent sequence. We will use the rest of the paper to give results related to this conjecture. For this purpose, we need to study more carefully the relationship between $H$-line graphs and the property of unicyclicness. 

\begin{definition}
Let $C$ be the unique cycle in a unicyclic graph $G$.
\begin{itemize}
    \item The subgraph $A$ of $G$ is called an arm if $A$ is a component of $G-C$. 
    \item The armset of $G$, denoted by $\mathcal{A}(G)$, is the set
$$
\mathcal{A}(G):=\{A \text{ : }A \text{ is an arm of G} \}.
$$
    \item The vertex $r\in V(C)$ is a called a root if $r$ is adjacent to some vertex in an arm $A\in \mathcal{A}(G)$.
    \item The root identifier function, denoted by $\mathcal{A}_G: \mathcal{A}(G) \rightarrow V(C)$, is the function that takes $A$ to the unique root that is adjacent to some vertex in $A$.
\end{itemize}
\end{definition}

Note that $\mathcal{A}_G$ is well defined because if there were two roots $r_1$ and $r_2$ associated with an arm $A$, then $G$ would not be unicyclic to begin with. Further, it is not necessary for $\mathcal{A}_G$ to be a one-to-one function. In particular, there exists unicyclic graphs $G\in \Lambda_n$ that have roots adjacent to multiple roots. For instance, the graph in Figure \ref{fig2.5} has 4 roots but 8 arms. We need a result due to Britto-Pacumio in \cite{limits}. 

\begin{theorem} \label{limits2}
\textbf{Britto-Pacumio. }If $G$ is unicyclic and every edge of $G$ is in a $P_n$, then $cr(HL(G))\geq cr(G)$.
\end{theorem}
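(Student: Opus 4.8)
The plan is to exhibit, inside $HL(G)$, a cycle whose length is at least $cr(G)$. It suffices to treat one component, so assume $G$ is unicyclic with unique cycle $C=v_0v_1\cdots v_{m-1}v_0$, where $m=cr(G)$, and write $e_i=v_iv_{i+1}$ for the cycle edges (indices taken mod $m$). These $m$ edges are distinct vertices of $HL(G)$, and the natural candidate for the desired cycle is the closed sequence $e_0,e_1,\ldots,e_{m-1},e_0$ that winds around $C$ exactly once. For this sequence to trace a cycle in $HL(G)$ I need each consecutive pair $e_i,e_{i+1}$ (which meet at $v_{i+1}$) to be $P_n$-adjacent; when such a pair fails to be adjacent I will repair the sequence by splicing in a single arm edge rooted at $v_{i+1}$.

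First I would dispose of the regime $m\ge n$. Here $v_i,v_{i+1},v_{i+2}$ are three consecutive vertices of a cycle of length at least $n$, so any run of $n$ consecutive vertices of $C$ containing them induces a subgraph isomorphic to $P_n$ that contains both $e_i$ and $e_{i+1}$. Thus every consecutive pair is $P_n$-adjacent, the sequence $e_0,\ldots,e_{m-1}$ already forms a cycle of length $m$ in $HL(G)$, and $cr(HL(G))\ge m$ follows at once.

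The substance lies in the regime $m<n$, where $C$ alone is too short to host a copy of $P_n$ and the arms must contribute. Fix $i$ and suppose $e_i$ and $e_{i+1}$ are \emph{not} $P_n$-adjacent, i.e. no copy of $P_n$ contains the sub-path $v_iv_{i+1}v_{i+2}$. I claim the first edge $f=v_{i+1}w$ of a longest arm rooted at $v_{i+1}$ is $P_n$-adjacent to both $e_i$ and $e_{i+1}$, so that $e_i,f,e_{i+1}$ is a valid length-two detour. The mechanism is a longest-path count. By hypothesis $e_i$ lies in a copy of $P_n$; take a longest path $Q$ through $e_i$. Since $e_i,e_{i+1}$ are not $P_n$-adjacent, $Q$ cannot use $e_{i+1}$, so at $v_{i+1}$ it does not continue to $v_{i+2}$ and must instead descend into an arm $A$ at $v_{i+1}$. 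Writing $\ell$ for the number of vertices of $Q$ lying on the $v_i$ side and $|A|$ for the number on the arm side, maximality and the requirement that $Q$ have at least $n$ vertices give $|A|+\ell+2\ge n$. Running the same argument from $e_{i+1}$ (whose longest path likewise escapes into the same longest arm, since it cannot use $e_i$) yields $|A|+r+2\ge n$, where $r$ measures the material reachable from $v_{i+2}$ outward. These two inequalities are exactly the lengths of the paths obtained by prepending the arm $A$ to $v_{i+1}v_i\cdots$ and to $v_{i+1}v_{i+2}\cdots$, which therefore contain copies of $P_n$ witnessing that $f$ is $P_n$-adjacent to $e_i$ and to $e_{i+1}$.

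Finally, splicing such an $f$ into every failed gap produces a closed sequence whose terms are pairwise distinct: the $e_i$ are distinct, and each spliced arm edge is incident to the cycle vertex of its own gap, hence differs from every cycle edge and from arm edges spliced at other vertices. Consecutive terms are $P_n$-adjacent by construction, so the sequence is a genuine cycle in $HL(G)$ of length $m+(\text{number of splices})\ge m$, giving $cr(HL(G))\ge cr(G)$. The step I expect to be the main obstacle is guaranteeing, in the count above, that $v_{i+1}$ actually carries an arm for $Q$ to escape into. This can fail only in the degenerate case where $v_{i+1}$ has degree $2$ and the longest path through $e_i$ wraps almost the whole way around $C$ and terminates at $v_{i+1}$. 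I would handle this branch separately by showing that such a wrap-around path already forces $e_i$ to be $P_n$-adjacent to $e_{i+2}$ (so the winding sequence may simply bypass $v_{i+1}$ without losing length). Verifying that this degenerate branch never shortens the resulting cycle below $m$ is where the real care is required; the generic branch is controlled cleanly by the two inequalities above.
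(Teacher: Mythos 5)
A preliminary remark: the paper does not prove this theorem at all; it is imported verbatim from Britto-Pacumio \cite{limits} with attribution, so there is no in-paper proof to compare against and your proposal must be judged on its own merits. Your architecture (wind once around the unique cycle $C$, and splice a single arm edge into each gap where consecutive cycle edges $e_i,e_{i+1}$ fail to be $P_n$-adjacent) is viable, and your generic branch is essentially correct after polish: for $m\geq n$ note that when $m=n$ a run of $n$ consecutive cycle vertices \emph{induces} $C_n$, not $P_n$, though it still \emph{contains} the needed $P_n$ subgraph; and the claim that both witnesses escape into ``the same longest arm'' needs the explicit observation that any descent of a witness beyond $v_{i+1}$ may be replaced by the deepest descent at $v_{i+1}$, which is legitimate because arms at $v_{i+1}$ are reachable only through $v_{i+1}$ and the remainder of the witness avoids that vertex.

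The genuine gap is exactly at the spot you flagged, and your proposed repair cannot work. You suggest handling the degenerate branch ($\deg(v_{i+1})=2$, witness terminating at $v_{i+1}$) by showing that the wrap-around path forces $e_i$ to be $P_n$-adjacent to $e_{i+2}$ and then bypassing $v_{i+1}$. But $P_n$-adjacency requires, by definition, that the two edges be adjacent in $G$, and $e_i=v_iv_{i+1}$ and $e_{i+2}=v_{i+2}v_{i+3}$ share no endpoint whenever $m\geq 4$; no path, however long, can make them adjacent in $HL(G)$. (For $m=3$ the bypassed closed sequence degenerates below a cycle, and in any case dropping $e_{i+1}$ threatens your length count $m+\#\text{splices}\geq m$.) The correct repair is a reroute, not a bypass, and it shows the degenerate branch is vacuous: suppose $\deg(v_{i+1})=2$ and take $W\cong P_n$ containing $e_{i+1}$. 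If $e_i\subseteq W$ the pair is adjacent. Otherwise the only other edge of $G$ at $v_{i+1}$ is $e_i$, so $v_{i+1}$ is an end of $W$, and $S=W-v_{i+1}$ is a path of order at least $n-1$ starting at $v_{i+2}$ inside the tree $T=G-v_{i+1}$. If $v_i\notin S$, prepend $v_i$ to $W$ to get a path of order $n+1$ containing both $e_i$ and $e_{i+1}$. If $v_i\in S$, then the portion of $S$ from $v_{i+2}$ to $v_i$ is the unique path between them in $T$, namely the long arc $v_{i+2},v_{i+3},\dots,v_{i-1},v_i$ on $m-1$ vertices, and the rest of $S$ is a descent $D$ into an arm at $v_i$ with $|D|\geq n-m$; now traverse the same vertex set as $D$ reversed, then $v_i,v_{i+1},v_{i+2},\dots,v_{i-1}$, a path of order $|D|+m\geq n$ containing both $e_i$ and $e_{i+1}$. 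Hence any failing consecutive pair meets at a vertex of degree at least $3$, i.e., at a root carrying an arm, which is precisely where your splice argument applies; with this substitution your proof closes and yields a cycle of length $m+\#\text{splices}\geq cr(G)$ in $HL(G)$.
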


\begin{corollary}
Let $G\in \lambda_n$. If $G$ is unicyclic, then $HL(G)$ is not a tree. 
\end{corollary}

The proof of the above corollary is immediate from Lemma \ref{key3} and Theorem \ref{limits2}. The rigid structures of unicyclic graphs allows for many proof techniques that make use of roots and arms. 

\begin{lemma} \label{finalkey}
Let $G\in\Lambda_n$ such that $G$ is unicyclic. If $e$ is an edge in an arm of $G$, then $e$ cannot be in a cycle of $HL(G)$. 
\end{lemma}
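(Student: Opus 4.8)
The plan is to argue by contradiction: suppose the arm edge $e=uv$ (so both $u$ and $v$ lie off the unique cycle $C$) is a vertex of some cycle $D$ in $HL(G)$, and deduce that $G$ must diverge by order, contradicting $G\in\Lambda_n$. The organizing device is the subgraph $G[D]\subseteq G$ whose edge set is exactly the vertex set of $D$. Since consecutive vertices of $D$ are $P_n$-adjacent in $G$, they are in particular adjacent in $G$ and their shared vertex lies in $G[D]$, so $D$ is also a cycle in the ordinary line graph $L(G[D])$. I would then split on whether $G[D]$ is a forest.

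Suppose first that $G[D]$ is not a forest. Since $G$ is unicyclic, its only cycle is $C$, so $G[D]$ must contain all of $C$. Because $e$ is an \emph{arm} edge, $v$ is off $C$, and hence the edge $e_1=vv_1$ leaving $v$ toward $C$ is a bridge of $G$ lying on the unique path from $e$ to $C$; as $G[D]$ is connected and contains both $e$ and $C$, this forces $e_1$ to be a vertex of $D$. Deleting the bridge $e_1$ splits $G$ into sides $X_1$ (containing $u,v,e$) and $Y_1$ (containing $C$) with disjoint vertex sets, so no edge of $X_1$ is $G$-adjacent to an edge of $Y_1$; consequently $HL(G)$ has no edge between $E(X_1)$ and $E(Y_1)$, and $e_1$ is the only vertex of $HL(G)$ adjacent to both sides. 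But $D-e_1$ is a path in $HL(G)-e_1$ containing $e\in E(X_1)$ and a cycle-edge of $C\subseteq E(Y_1)$, which is impossible. This rules out the non-forest case using only the separation property of bridges, with no divergence result required.

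Suppose instead that $G[D]$ is a forest. I would invoke the standard fact that the line graph of a forest is a union of blocks, each a clique consisting of the edges incident to a single vertex; in particular every cycle of $L(G[D])$, and hence $D$, uses only edges sharing one common vertex $w$. Since $e\in D$ we have $w\in\{u,v\}$, and as $e$ is an arm edge, $w$ is a tree-vertex of $G$. The key feature of this reduction is that adjacency in $HL(G)$ among the edges at $w$ is governed by a single parameter: writing $L_i$ for the order of a longest path leaving $w$ through its $i$-th neighbor, two edges at $w$ are $P_n$-adjacent precisely when $L_i+L_j\ge n-1$. Thus $HL(G)$ restricted to the star at $w$ is a threshold graph, and in particular has no induced cycle of length at least four; since it contains the cycle $D$, it must contain a triangle, i.e.\ three edges at $w$ that are pairwise $P_n$-adjacent. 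Extending these edges along the (vertex-disjoint) branches realizing the three witnessing copies of $P_n$ yields a subgraph $CL(x,y,z)\subseteq G$ whose three central edges are pairwise $P_n$-adjacent, so by Corollary \ref{badclaws} this claw, and therefore $G$, diverges by order — the desired contradiction.

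I expect the forest case to be the main obstacle. The naive hope that the three edges $e,f_1,f_2$ meeting at $w$ on $D$ are already pairwise $P_n$-adjacent fails, because a cycle guarantees only consecutive adjacencies, not a triangle. The crucial step is therefore the observation that adjacency inside a single vertex-star is controlled by the one-parameter condition $L_i+L_j\ge n-1$, so that this induced subgraph of $HL(G)$ is chordal and any cycle in it collapses to a triangle; extracting that triangle is exactly what makes Corollary \ref{badclaws} applicable. A secondary point requiring care is that the three chosen branches at $w$ can be taken vertex-disjoint and simultaneously long enough to carry all three witnessing paths, which is where unicyclicity is used: from the tree-vertex $w$ only one branch can lead toward $C$, so the branches cannot interfere.
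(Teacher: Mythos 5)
Your proof is correct, and while it funnels into the same endgame as the paper --- producing three pairwise $P_n$-adjacent edges at a single arm vertex and invoking Corollary \ref{badclaws} --- the two key reductions are carried out by genuinely different means. For the step showing no vertex of the $HL(G)$-cycle can be an edge of $C$, the paper counts occurrences of the root edge $f$ along the cyclic sequence $e_1,\dots,e_p$ and derives a repeated element; you instead delete a bridge on the unique path from $e$ to $C$ and observe that $HL(G)$ has no edge between the two resulting sides, so the path $D-e_1$ cannot cross --- a cleaner separation argument that needs no divergence machinery. For the step showing all remaining vertices of $D$ are incident to one vertex $w$, the paper only gestures (``we can craft a similar argument''), whereas you cite the block structure of line graphs of forests, which actually fills that gap; you do need $D$ to be a cycle of $L(G[D])$ rather than of $HL(G)$ per se, but your reduction handles that correctly since $HL$-adjacency implies $L$-adjacency. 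Finally, where the paper splits into $p=3$ and $p>3$ and runs an explicit inequality chase with $k, k', k''$ (culminating in $n>2k+2$ and the triangle $\{e_2,e_p,e_{p-1}\}$), you observe that $P_n$-adjacency among the star edges at $w$ is the sum-threshold condition $L_i+L_j\ge n-1$ --- valid here because $w$ is off $C$, so all branches at $w$ are vertex-disjoint --- hence the induced subgraph is a threshold graph, is chordal, and any cycle in it collapses to a triangle. This is the conceptual content of the paper's inequalities, stated once and for all; the paper's version is more self-contained and elementary, while yours is shorter, avoids the case split on $p$, and makes transparent exactly where unicyclicity (via vertex-disjoint branches) is used. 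One small point of care you handle correctly but should state explicitly in a final write-up: the forest/non-forest dichotomy on $G[D]$ is exhaustive because $G$ is unicyclic, so a non-forest $G[D]$ must contain all of $C$, and the labeling of $v$ as the endpoint of $e$ nearer to $C$ should be fixed without loss of generality before defining $e_1$.
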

\begin{proof}
Let $C$ be the unique cycle in $G$. For a contradiction, assume that there exists an arm $A\in \mathcal{A}(G)$ and a cycle $C'$ in $HL(G)$ such that $e\in E(A)$ and $e\in V(C')$. Set $C': e_1, ... , e_p, e_1$ where $e_i\in E(G)$. Without loss of generality, assume that $e = e_1$, and let $v$ be the vertex in $G$ incident to both $e_1$ and $e_2$. Since both vertices incident to $e$ are in $A$, we have that $v\in V(A)$. Let $f$ be the edge incident to $\mathcal{A}_G(A)$, the root of $A$, and to some vertex $u$ in $A$, the vertex in $A$ adjacent to the root. 

Notice that $e_i \not = e_j$ for $i \not = j$. If there exists an $i$ such that $e_i\in V(C)$, then $f$ would be in the sequence $e_1, e_2,..., e_i$. However, $f$ would also need to be in the sequence $e_i, e_{i+1}, ..., e_{p}, e_1$. Since $f$ is not in the arm, we have that $f \not = e_1$ and $f\not = e_i$, so then we have a repeated element in the sequence $e_1,..., e_p$, which is a contradiction. Thus, $e_i\not \in V(C)$ for every $i$. In other words, every vertex of the cycle $C'$ must be either in $A$ or be $f$. Since $A$ is a tree, we have that every edge in $V(C')$ must be incident to the same vertex as otherwise we can craft a similar argument to the case where there is an edge in the cycle of $G$. So every edge $e_i$ is incident to $v$. 

\begin{figure}[!h]
    \begin{center}
    \includegraphics[scale=0.65]{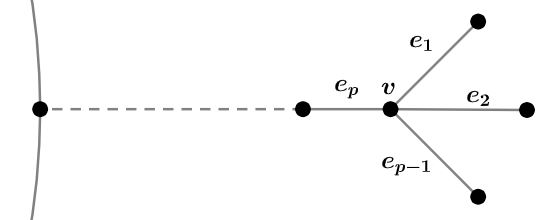}
    \end{center}
    \caption{An illustration of a subgraph of $G$.}
    \label{fig3.2}
\end{figure}

If $p=3$, then the graph contains a claw where the edges incident to $v$, which would be  $\{e_1, e_2, e_3 \}$, are pairwise $P_n$-adjacent. By Corollary \ref{badclaws}, the graph has a divergent sequence, which is a contradiction. Thus, $p > 3$. There exists a unique path between $v$ and $\mathcal{A}_G(A)$. Without loss of generality, assume that this path contains the edge $e_p$. See Figure \ref{fig3.2}. Let $e_i = vv_i$ for every $i$. Let $P_i$ denote the longest path that has as an endpoint $v_i$ and which does not contain the edge $e_i$. For $e_1, e_2$ and $e_{p-1}$, denote the order of those paths by $k, k'$ and $k''$ respectively. Since $e_1$ is $P_n$-adjacent to $e_p$, the order of $P_p$ must be at least $n-k-1$. Since $e_p$ is $P_n$-adjacent to $e_{p-1}$, it must be that $n-k-1 + k'' \geq n$, or that $k'' \geq k+1$. It cannot be the case that $e_1$ is $P_n$-adjacent to $e_{p-1}$ because then the set $\{e_1, e_p, e_{p-1} \}$ would be a set of pairwise $P_n$-adjacent edges in a claw, and Corollary \ref{badclaws} would give a contradiction. Thus, $n > k + k'' + 1\geq k + k+1+1$, or that 
\begin{equation}\label{handy}
    n > 2k + 2.
\end{equation}
Since $e_1$ is $P_n$-adjacent to $e_2$, it follows that $k + k' + 1 \geq n$, or that $k' \geq n-k-1$. Before we noted that $k'' \geq k+1$, so $k' + k'' +1 \geq n-k-1 + k+1+1 = n+1$. Thus, $e_2$ is $P_n$-adjacent to $e_{p-1}$ Finally, notice that 
\begin{eqnarray*}
1+k' + n-k-1 & \geq & 1 + n-k-1 + n-k-1 \\
            & = & 2n -2k -1 \\
            & > & n
\end{eqnarray*}
where the last inequality is obtained from equation (\ref{handy}). Thus, $e_2$ is $P_n$-adjacent to $e_p$, and so $\{e_2, e_p, e_{p-1} \}$ is a set of pairwise $P_n$-adjacent edges in a claw. By Corollary \ref{badclaws}, $G$ has a sequence that diverges by order, which is the contradiction, which finishes the proof.
\end{proof}
\begin{corollary}\label{smallkey}
Let $G\in \Lambda_n$ such that $G$ is unicyclic. If $r$ is a root, then the edges incident to $r$ cannot induce, as vertices, a graph with a cycle of order $4$ or more in $HL(G)$.
\end{corollary}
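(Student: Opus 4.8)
The plan is to argue by contradiction: assume that $r$ is a root and that the edges of $G$ incident to $r$ induce, as vertices of $HL(G)$, a subgraph containing a cycle $C'$ of length $p\ge 4$. Every such edge has $r$ as an endpoint, so any two of them are adjacent in $G$ and are therefore adjacent in $HL(G)$ exactly when they lie on a common $P_n$. For an edge $e=rv$ incident to $r$, write $\ell(e)$ for the order of a longest path of $G$ beginning at $v$ and avoiding $r$; then $e=rv$ and $e'=rv'$ are $P_n$-adjacent precisely when one can choose such longest paths from $v$ and $v'$ that are vertex-disjoint and whose orders sum to at least $n-1$. Because $G$ is unicyclic with unique cycle $C$ and $r\in V(C)$, the edges at $r$ split into exactly two cycle edges $c_1,c_2$, whose witnessing paths run along $C$ and hence cannot be made disjoint from one another, and the connecting edges $f_1,\dots,f_s$ of the distinct arms at $r$, whose witnessing paths live in pairwise disjoint arms. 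The engine of the proof is the following observation: if three edges at $r$ are pairwise $P_n$-adjacent and admit pairwise disjoint witnessing paths, then they are the three central edges of a subgraph $CL(x,y,z)\subseteq G$ on which those edges are pairwise $P_n$-adjacent, so Corollary \ref{badclaws} forces divergence by order, contradicting $G\in\Lambda_n$. Hence no three edges at $r$ with disjoint witnesses are pairwise $P_n$-adjacent; in particular any such forbidden triple may use at most one of $c_1,c_2$.

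I would first dispose of the case in which $C'$ meets at most one cycle edge. If $C'$ uses no cycle edge, all its vertices are connecting edges, for which $f_i$ and $f_j$ are $P_n$-adjacent iff $\ell(f_i)+\ell(f_j)\ge n-1$; since the preceding paragraph forbids three pairwise $P_n$-adjacent connecting edges, the induced graph on the connecting edges is a triangle-free threshold graph, which is a star together with isolated vertices and therefore acyclic, so $C'$ cannot exist. If $C'$ uses exactly one cycle edge, I would take that edge as the ``long'' direction and reproduce the length-counting argument in the proof of Lemma \ref{finalkey} (assigning to the connecting edges on $C'$ their bounded arm-path orders $k,k',k''$): the same chain of inequalities produces three pairwise $P_n$-adjacent edges lying in distinct arms apart from the single cycle edge, i.e.\ a triple with disjoint witnesses, again contradicting Corollary \ref{badclaws}.

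The hard part will be the case in which $C'$ uses both $c_1$ and $c_2$. Here the two cycle edges supply two competing ``around-$C$'' directions whose witnessing paths necessarily overlap on $C$, so no single claw can capture $\{c_1,c_2,\cdot\}$; indeed one can build unicyclic graphs in which $c_1,c_2$ and two arm edges close a $4$-cycle in $HL(G)$ while $G$ itself contains no bad claw, so an immediate contradiction via Corollary \ref{badclaws} is impossible. My plan for this case is to pass to a later iterate. The cycle edges $c_1,c_2$ fail to be $P_n$-adjacent to their cycle-neighbours only ``through $C$'', and the arm edges on $C'$ serve exactly as detours that reconnect them in $HL(G)$; tracking these detours, I expect to show that the image of $C$ grows into a strictly longer chordless cycle at each iteration, so that some $HL^k(G)$ contains a cycle of order at least $n$ inside a connected subgraph that is not a pure cycle. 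Theorem \ref{key1} then yields divergence by order, the desired contradiction. The crux, and the step I expect to be genuinely delicate, is making this growth quantitative: bounding below the length added by the arm detours (using the inequalities $\ell(c_t)+\ell(f_i)\ge n-1$ together with the fact that no arm yields a subgraph $G^r_m$ with $r+m>n$, by Theorem \ref{secondfamily}) and ruling out stabilization of the circumference below $n$, all without the monotonicity tool of Theorem \ref{limits2}, which applies only while the graph remains unicyclic.
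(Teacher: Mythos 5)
Your Cases A and B are sound. The threshold-graph observation in Case A (two connecting edges at $r$ are $P_n$-adjacent exactly when their arm-path orders sum to at least $n-1$, and a triangle-free graph of this kind is a star plus isolated vertices) is a clean substitute for the paper's counting, and Case B is essentially the paper's own argument from Lemma \ref{finalkey} with the unique cycle edge of $C'$ in the role of $e_p$; witness-disjointness holds because deleting $r$ separates each arm at $r$ from the rest of $G$. (One small inaccuracy: witnesses for $c_1$ and $c_2$ \emph{can} be disjoint when they are short relative to the cycle, so triples containing both cycle edges sometimes do form bad claws; this does not affect your Cases A and B.) The genuine gap is Case C, which you present as a plan rather than a proof, and whose boundary you have also drawn too generously. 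More claw-chasing shows that whenever both $c_1,c_2$ lie on $C'$ and $C'$ has length at least $5$, or length $4$ with $c_1,c_2$ consecutive, a bad claw still exists: if $g_1\sim g_2$ are consecutive arm edges on $C'$ with a cycle edge $c\sim g_1$, then $a_1+a_2+1\ge n$ and the witness for $c\sim g_1$ yields a path of order at least $n-1-a_1$ from the far endpoint of $c$ avoiding $r$; when $a_2\ge a_1$ this makes $c\sim g_2$, so $\{c,g_1,g_2\}$ is a bad claw, and when $a_1>a_2$ either the symmetric triple at the other cycle edge or $\{g_1,g_2,g_3\}$ (via $a_1+a_3+1\ge a_2+a_3+2\ge n+1$) works. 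The only irreducible configuration is the alternating $4$-cycle $c_1,f_i,c_2,f_j$.

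For that configuration your suspicion is correct, and it in fact exposes a hole in the paper's one-sentence proof that ``a claw \dots can still be obtained'': for $n=10$ let $G$ be $C_9$ with two pendant paths of order $4$ attached at a single vertex $r$. The four edges at $r$ induce exactly a $C_4$ in $HL(G)$ ($c_1\not\sim c_2$ since any path containing both has order at most $9$, and $f\not\sim f'$ since $4+1+4=9<10$), while $G$ has no vertex of degree at least $3$ other than $r$ and no pairwise $P_n$-adjacent triple at $r$, so Corollary \ref{badclaws} cannot be applied to any subgraph. So the claw tool genuinely fails there, exactly as you predicted; but your replacement --- iterating until the cycle grows past $n$ and then invoking Theorem \ref{key1} --- is, by your own admission, left entirely unquantified, and as submitted the proposal therefore does not prove the corollary. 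Note that in the example above no induction is needed: $HL(G)$ already contains a $C_{10}$ (the nine cycle edges traversed in order and closed up through the single arm edge $rg_1$, consecutive pairs lying on common paths of order $10$ that thread an arm) inside a connected graph on $17$ vertices, so Theorem \ref{key1} fires at $k=1$. Proving directly from the four adjacency inequalities that the alternating $4$-cycle always forces a cycle of order at least $n$ in $HL(G)$ itself is a far more promising way to close your gap than the full iterate-growth scheme; it would simultaneously repair the paper's proof, which as written covers only the configurations admitting a bad claw (the paper's later application of the corollary remains safe, since the hypothesis $g(HL(G))>4$ there excludes $4$-cycles altogether).
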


The corollary follows from the proof technique used with the case where $p>3$ and the fact that a claw that satisfies the condition of Corollary \ref{badclaws} can still be obtained. We now have all the tools needed for our last result. Remember that the girth of a graph $G$, denoted by $g(G)$, is the size of the smallest cycle in $G$.

\begin{theorem}
Let $G\in \lambda_n$ such that $g(HL(G))> 4$. If $G$ has unicyclic components, then $HL(G)$ has unicyclic components.
\end{theorem}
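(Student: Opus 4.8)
The plan is to reduce to a single unicyclic component and then show that its $H$-line graph is connected with cyclomatic number (first Betti number) exactly $1$. First I would use Lemma~\ref{newsupport} together with the fact that $HL$ distributes over disjoint unions (edges in different components are never $P_n$-adjacent) to write each component of $HL(G)$ as $HL(G_0)$ for a unique unicyclic component $G_0$ of $G$; it then suffices to prove each $HL(G_0)$ is unicyclic. That $HL(G_0)$ is connected follows as in Lemma~\ref{newsupport} (every edge of $G_0$ lies in a $P_n$ by Lemma~\ref{key3}, so $HL(G_0)$ has no isolated vertex, and Theorem~\ref{char2} gives at most one nontrivial component), and that it is not a tree follows from Theorem~\ref{limits2} and Lemma~\ref{key3}. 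So the whole task is to bound the number of independent cycles of $HL(G_0)$ by $1$.

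The engine of the proof is a partition of $E(HL(G_0))$ by the vertex of $G_0$ at which each $P_n$-adjacency occurs. For $w \in V(G_0)$ let $F_w$ be the subgraph of $HL(G_0)$ induced on the edges of $G_0$ incident to $w$. Since two $P_n$-adjacent edges share exactly one vertex of $G_0$, every edge of $HL(G_0)$ lies in exactly one $F_w$, so $|E(HL(G_0))| = \sum_w |E(F_w)|$, while $\sum_w |V(F_w)| = \sum_w \deg_{G_0}(w) = 2|E(G_0)|$ and $|V(HL(G_0))| = |E(G_0)|$. If every $F_w$ is a forest with $c_w$ components, then $|E(F_w)| = |V(F_w)| - c_w$, and direct substitution gives that the cyclomatic number of $HL(G_0)$ equals $|E(G_0)| - \sum_w c_w + 1$. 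Because $G_0$ is connected and unicyclic we have $|E(G_0)| = |V(G_0)|$, and since each $c_w \ge 1$ we get $\sum_w c_w \ge |V(G_0)|$; hence the cyclomatic number is at most $1$. Combined with the fact that $HL(G_0)$ is not a tree, this forces it to be exactly $1$, i.e. $HL(G_0)$ is unicyclic.

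It remains to verify that every $F_w$ is a forest, and this is where the structural hypotheses enter. For a vertex $w$ off the cycle of $G_0$, all but at most one of the edges at $w$ are arm edges, so any cycle in $F_w \subseteq HL(G_0)$ would contain an arm edge, contradicting Lemma~\ref{finalkey}. For a cycle vertex that is not a root, $F_w$ has only two vertices and is trivially a forest. The delicate case is a root $r$: there $F_r$ is spanned by the two cycle edges at $r$ together with the root edges at $r$. Corollary~\ref{smallkey} rules out any cycle of order $\ge 4$ in $F_r$, and the hypothesis $g(HL(G)) > 4$ rules out cycles of order $3$ and $4$; together these exclude every cycle, so $F_r$ is a forest. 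The girth hypothesis is genuinely needed at roots because the triangle $\{c,c',f\}$ formed by the two cycle edges $c,c'$ at $r$ and a root edge $f$ is not caught by Corollary~\ref{badclaws}: the two cycle edges extend into the single cycle rather than into vertex-disjoint paths, so $\{c,c',f\}$ need not lie inside a genuine $CL(\cdot,\cdot,\cdot)$, and only triangle-freeness disposes of it.

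The main obstacle is twofold. The substantive point is the forest claim at roots just described, since it is the only place the girth hypothesis is used and it requires combining Corollary~\ref{smallkey} with triangle-freeness rather than with the claw result alone. The second, more bookkeeping, obstacle is that a disconnected minimally $n$-convergent graph need not have any component in $\Lambda_n$: by minimality every proper subgraph, in particular every component, lies outside $\Lambda_n$, so Lemma~\ref{finalkey} and Corollary~\ref{smallkey} cannot be quoted for $G_0$ verbatim. I would resolve this by observing that the proofs of those two results only ever exhibit a subgraph whose sequence diverges by order; since such a subgraph of $G_0 \subseteq G$ forces $G$ itself to diverge by order, contradicting $G \in \Lambda_n$, both results remain valid for each unicyclic component $G_0$ of $G$.
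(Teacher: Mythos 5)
Your proposal is correct, but it takes a genuinely different route from the paper's proof. The paper argues by contradiction on two distinct cycles $C_1, C_2$ of $HL(G)$: Lemma \ref{finalkey} excludes arm edges from any cycle of $HL(G)$, which together with Corollary \ref{smallkey} and $g(HL(G))>4$ forces every edge of the cycle of $G$ to lie (as a vertex) on both $C_1$ and $C_2$; an edge $e\in C_1\setminus C_2$ is then incident to a root $r$, and combining the root-incident portions of the two cycles yields a cycle of $HL(G)$ all of whose vertices are edges at $r$, contradicting Corollary \ref{smallkey} again. You instead make a global count: partitioning $E(HL(G_0))$ by the unique shared endpoint of each $P_n$-adjacency, proving each vertex-star $F_w$ is a forest (Lemma \ref{finalkey} at arm vertices, degree two at non-root cycle vertices, Corollary \ref{smallkey} plus triangle-freeness at roots), and then computing that the cyclomatic number of the connected graph $HL(G_0)$ is $|E(G_0)|-\sum_w c_w + 1 \le 1$, which with Theorem \ref{limits2} pins it at exactly $1$. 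I checked the bookkeeping (each edge of $HL(G_0)$ lies in exactly one $F_w$ since two distinct adjacent edges of a simple graph share exactly one vertex, $\sum_w |V(F_w)| = 2|E(G_0)|$, and $c_w\ge 1$ for all $w$ because $G_0$ is connected), and it is sound. The counting route buys two things: it isolates precisely where $g(HL(G))>4$ is needed --- the possible triangle on the two cycle edges and a root edge at a root, which Corollary \ref{badclaws} cannot reach because the branches at a root are not vertex-disjoint paths --- which squares with the paper's closing remark that the girth hypothesis is probably not necessary; and it reduces the theorem to a purely local acyclicity statement, whereas the paper's argument requires tracking how two global cycles interact. You also correctly identify and repair a gap that the paper's own reduction (``we may assume $G$ is connected'') glosses over: a component of a disconnected $G\in\lambda_n$ is a proper subgraph and hence \emph{not} in $\Lambda_n$, so Lemma \ref{finalkey} and Corollary \ref{smallkey} do not apply to it verbatim; your observation that their proofs only ever exhibit subgraphs whose sequences diverge by order, which would contradict $G\in\Lambda_n$ for the ambient graph, is exactly the right fix, and the same patch is needed to make the paper's proof rigorous in the disconnected case.
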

\begin{proof}
We may, again, assume $G$ is connected as the proof applies to each components of $G$ if $G$ is disconnected. Since $G$ is unicyclic and is in $\lambda_n$, it follows that $HL(G)$ must have a cycle. 

For a contradiction, assume that there exists two cycles $C_1$ and $C_2$ in $HL(G)$ such that $C_1\not = C_2$. Corollary \ref{smallkey} and $g(HL(G))>4$ imply that no root is incident to every edge of $C_1$ or $C_2$. And since no edge in the arms can be in a cycle of $HL(G)$ it must be that every edge in the cycle of $G$ is in the cycles of $HL(G)$. Set $C$ as the cycle of $G$, so $E(C)\subseteq V(C_1)$ and $E(C) \subseteq V(C_2)$. Since $C_1\not = C_2$, there must exists an edge $e\in C_1$ that is not in $C_2$. This edge cannot be in $C$ so it is incident to a root $r$. Set $E(r)$ as the set of edges incident to $r$, and let $E(r) \cap E(C) = \{f_1, f_2 \}$. But $f_1$ and $f_2$ are both in $V(C_1)$ and $V(C_2)$, so 
$$
(E(r)\cap V(C_1)) \cup (E(r)\cap V(C_2))
$$
induces at least one cycle in $HL(G)$. Every edge in this cycle is incident to $r$, which contradicts Corollary \ref{smallkey}.
\end{proof}

Assuming that $g(HL(G))>4$ is most likely not necessary for the statement to remain true. Finding a proof that avoids using this assumption is desirable, but probably hard. To continue the study of minimally $n$-convergent graphs, we propose two directions. The first one is working more towards the proof of Conjecture \ref{unicyclicMin}. The second one, which has not been discussed in detail in this paper, is establishing the veracity of the following conjecture. 

\begin{conjecture} \label{bridge}
If $G\in \Lambda_n$ and $G$ is not the disconnected union of two graphs in $\Lambda_n$, then there exists a unique graph $G'$ in $\lambda_n$ such that $G'\subseteq G$.
\end{conjecture}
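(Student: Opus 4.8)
\section*{Proof proposal for Conjecture \ref{bridge}}

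The plan is to take existence for granted---it is already recorded that every member of $\Lambda_n$ contains a subgraph in $\lambda_n$---and to spend all the effort on uniqueness. So suppose, for contradiction, that $G$ admits two distinct minimally $n$-convergent subgraphs $G_1, G_2 \in \lambda_n$. I would first record the working observation that, since every copy of $P_n$ is connected, $P_n$-adjacency never crosses between components; hence $HL$ distributes over disjoint unions, $HL(A \sqcup B) = HL(A) \sqcup HL(B)$, and the sequence of any graph is the component-wise union of the sequences of its components. A second consequence, this time of minimality, is that no $G_i$ can itself be written as a disjoint union of two graphs in $\Lambda_n$: such a splitting would exhibit a proper subgraph of $G_i$ lying in $\Lambda_n$, contradicting $G_i \in \lambda_n$. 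Thus each minimal convergent subgraph is an \emph{indivisible unit}, and the whole argument reduces to understanding how indivisible units sit inside $G$.

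The structural engine I would build is a decomposition of $G$ into such units. Because $G \in \Lambda_n$, no component of $G$ can diverge by order (otherwise $G$ would, by the observation that a subgraph diverging by order forces divergence by order); hence every component has bounded order and its sequence is eventually periodic. Applying $HL$ then permutes the eventual component-limits, sorting the components into periodic orbits alongside the components that terminate. Using the distribution of $HL$ over disjoint unions, I would prove that a union of components lies in $\Lambda_n$ precisely when these orbits are \emph{balanced} (each cyclic orbit populated uniformly across its positions) and the limit is nonempty; the period-$2$ swapping behind Figure \ref{fig1.3} is exactly an unbalanced pair of components that becomes balanced only when taken together. The payoff is that $G$ splits into balanced indivisible units, and the hypothesis that $G$ is not a disconnected union of two graphs in $\Lambda_n$ forces there to be essentially one such unit (a single connected period-$1$ component, or a single balanced cyclically permuted family, together with terminating components that cannot be detached as a $\Lambda_n$ graph of their own).

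With this in hand I would split into two cases according to whether $G_1$ and $G_2$ meet a common component of $G$. If their component-supports are disjoint, then $G_1$ and $G_2$ are two balanced units lying in disjoint collections of components; grouping one of them as $A$ and everything else as $B$ produces a genuine decomposition $G = A \sqcup B$ with $A, B \in \Lambda_n$, contradicting the hypothesis. If instead $G_1$ and $G_2$ share a component $K$, then, being incomparable by minimality (neither contains the other, since each lies in $\Lambda_n$), they present two distinct convergent structures inside the connected graph $K$, and the goal is to show $K$---hence $G$---must diverge by order. Here I would run the same-component analysis in the spirit of the theorem on two distinct members of $\delta_n$: iterate the sequence of $K$ until the two units mature into cycles of order at least $n$, and if these two large cycles ever sit distinctly in one component, invoke Theorem \ref{key1} to conclude divergence by order; the bad-claw route of Corollary \ref{badclaws}, together with Lemma \ref{key3}, Lemma \ref{finalkey} and Corollary \ref{smallkey}, would handle the configurations in which the two structures interlock around a shared root rather than yielding two separate cycles.

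The step I expect to be the main obstacle is precisely this shared-component case. Proving that two distinct minimal convergent subgraphs inside one connected graph must force divergence is essentially the content of the still-open conjecture that distinct non-isomorphic convergent subgraphs in a common component diverge by order, and the isomorphic-but-distinct subcase is so far settled only for $\delta_n$. A self-contained proof would therefore require a robust classification of the fine structure of minimal convergent graphs---most usefully the unicyclic structure predicted by Conjecture \ref{unicyclicMin}---so that Theorem \ref{key1} and Corollary \ref{badclaws} can always be triggered. A secondary difficulty is the disjoint-support case when one of the units is spread across several components of $G$: one must check that peeling off the entire unit, rather than an unbalanced piece of it, still leaves a graph in $\Lambda_n$, which again rests on the balanced-orbit characterization of $\Lambda_n$-membership for disjoint unions that this plan must first establish rigorously.
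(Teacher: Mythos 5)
You should first be clear about what the paper does with this statement: it is Conjecture \ref{bridge}, and the paper does \emph{not} prove it. The paper records only the existence half (every graph in $\Lambda_n$ contains a subgraph in $\lambda_n$, immediate from the definition of minimality) and explicitly says that proving uniqueness ``needs a more thorough development of the theory of minimally $n$-convergence.'' So there is no proof of the paper's to compare yours against; the question is whether your plan would close the gap, and it does not. Two steps are genuinely missing. First, your ``balanced orbit'' characterization of which disjoint unions lie in $\Lambda_n$ is asserted rather than proved. It is plausible --- $HL$ does distribute over disjoint unions because $P_n$ is connected, convergence does bound the order so the sequence of isomorphism types of each component is eventually periodic, and Theorem \ref{char2} shows components never merge --- but the details (components shedding isolated vertices, preperiods of different lengths, terminating components, and the requirement that the limit be nonempty) are exactly the delicate territory illustrated by Britto-Pacumio's example in Figure \ref{fig1.3}, and would need a carefully stated and proved lemma before either of your cases can even be set up.

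Second, and more seriously, your shared-component case is --- as you yourself concede --- essentially the paper's other open conjecture from Section 2: that a connected graph containing two distinct, non-isomorphic subgraphs in $\Lambda_n$ has a sequence that diverges by order. The only instance of this that the paper actually settles is when both subgraphs lie in $\delta_n$ (via Theorems \ref{secondfamily}, \ref{key1} and \ref{char2}), and your proposal to ``iterate until the two units mature into cycles of order at least $n$'' presupposes structural control over arbitrary members of $\lambda_n$ that the paper does not have; in effect it assumes Conjecture \ref{unicyclicMin} and more before Theorem \ref{key1} or Corollary \ref{badclaws} can be triggered. There is also an unaddressed intermediate case: since graphs in $\lambda_n$ may be disconnected (the paper's whole point with Figure \ref{fig1.3} is that convergence can be a genuinely multi-component phenomenon), $G_1$ and $G_2$ may have component-supports that overlap in some components of $G$ but not in others; then the restrictions of $G_1$ and $G_2$ to a shared component $K$ need not themselves lie in $\Lambda_n$, so neither your disjoint-support splitting nor your ``two convergent structures inside $K$'' reduction applies as stated. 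In short, your proposal is a sensible research plan that correctly locates where the difficulty lives, but it reduces one open conjecture to another open conjecture plus an unproven decomposition lemma; it is not a proof.
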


The existence of $G'$ is already known. The conjecture adds that this graph is unique. This would imply that graphs with convergent sequences are actually just variations of graphs in $\lambda_n$. In other words, characterizing $\Lambda_n$ would heavily depend on characterizing $\lambda_n$. Proving Conjecture \ref{bridge}, however, needs a more thorough development of the theory of minimally $n$-convergence.

\section{Acknowledgements}

I want to thank first and foremost Dr. Michelle Robinette, my undergraduate thesis advisor. Her support in this project and other endeavours were key to my professional development. I also want to thank the other members of my thesis committee Dr. Bryan Bornholdt and Dr. Douglas Burke for their time and helpful comments. This research did not receive any specific grant from funding agencies in the public, commercial, or not-for-profit sectors.

\end{document}